\documentclass[reqno,b5paper]{amsart}

\usepackage[T2A]{fontenc}
\usepackage[english]{babel}
\usepackage{graphicx}
\usepackage{amsmath}
\usepackage{amsfonts}
\usepackage{amssymb}
\usepackage{amsthm}

\usepackage{enumerate}
\usepackage[mathscr]{eucal}

\setlength{\textwidth}{121.9mm} \setlength{\textheight}{176.2mm}

\theoremstyle{plain}
\newtheorem{theorem}{Theorem}

\newtheorem{prop}[theorem]{Proposition}
\newtheorem{lemma}[theorem]{Lemma}

\theoremstyle{remark}

\theoremstyle{definition}

\DeclareMathOperator{\pr}{pr} \DeclareMathOperator{\rp}{rp\,}
\DeclareMathOperator{\sa}{sa} \DeclareMathOperator{\un}{un}
\DeclareMathOperator{\RE}{Re\,} \DeclareMathOperator{\tr}{Tr\,}
\DeclareMathOperator{\rg}{rg\,}
 \DeclareMathOperator{\im}{i\,}

\begin{document}

\title[Orthogonal vector fields over  $W^*$-algebras of type $I_2$]{A characterization of orthogonal
vector fields over  ${\bf W^*}$-algebras of type ${\bf I_2}$}

\author[G.D. Lugovaya and A.N. Sherstnev]{G.D. Lugovaya and A.N. Sherstnev*}

\newcommand{\acr}{\newline\indent}

\address{\llap{*} Department of Mathematical Analysis \acr
Kazan Federal University \acr Kremlyovskaya St., 18 \acr Kazan
420008 \acr RUSSIAN FEDERATION}

\email{glugovay@kpfu.ru}

\email{Anatolij.Sherstnev@ksu.ru}

\subjclass{Primary 46L10, 46L51}

\keywords{$W^*$-algebra, orthogonal vector field, stationary vector field}

\begin{abstract}
In the paper we give a characterization of  a $w^*$-continuous orthogonal
vector field $F$ over an $W^*$-algebra $\mathcal{N}$ of type  $I_2$ in terms of  reductions $F$ on the center
of $\mathcal{N}$. As an application it is obtained a proof of the assertion that an arbitrary
$w^*$-continuous orthogonal vector field over a $W^*$-algebra  of type  $I_2$ is stationary.
\end{abstract}
\maketitle

\bigskip
\centerline{\textbf{Introduction}}

\bigskip In [1],[2] Masani studied the integration with respect to orthogonal vector measures defined
on rings of the sets. These works obtained an extension in numerous publications in context of
the noncommutative measure theory. Some settings of problems and approaches to their solutions may be
found in [3], [4,\S 31]. One of intrinsic problem in this subject is the problem of characterization of
Hilbert-valued linear mappings of $W^*$-algebra preserving the orthogonality property (so-called orthogonal
vector fields).  In some sense there is a standard (well known) result for the commutative $W^*$-algebra
(see for instance [4, Theorem 31.19]).

\medskip In the paper we give a characterization of  a $w^*$-continuous orthogonal
vector field $F$ over an $W^*$-algebra $\mathcal{N}$ of type  $I_2$ in terms of  reductions $F$ on the center
of $\mathcal{N}$. As an application it is obtained a proof of the assertion that an arbitrary
$w^*$-continuous orthogonal vector field over a $W^*$-algebra  of type  $I_2$ is stationary.

\bigskip
\centerline{\textbf{1. Preliminaries}}

\bigskip  Let $\mathcal{A}$ be a $W^*$-algebra, and $\mathcal{A}^{\pr},\
 \mathcal{A}^{\sa},\ \mathcal{A}^{\un}$
denote the sets of orthogonal
projections, selfadjoint, unitaries elements in $\mathcal{A}$, respectively.
We  denote by $\rp(x)$ the range projection of $x\in \mathcal{A}^+$. It is the least projection
of all projections
$p\in\mathcal{A}^{\pr}$ such that $px=x$. In the paper  we shall mainly deal with $W^*$-algebras  of type  $I_2$.
It is  known that any $W^*$-algebra of type  $I_2$ can be represented in the form  $\mathcal{N} =\mathcal{M}\otimes M_2$
where  $\mathcal{M}$ is a commutative $W^*$-algebra and $M_2$ is the algebra of all $2\times2$ matrices over
$\mathbb{C}$. So the elements of $\mathcal{N}$ are matrices in the form of $(x_{ij})=
\left(\begin{array}{cc} x_{11} & x_{12}\\
x_{21} & x_{22} \end{array}\right)$ where $x_{ij}\in\mathcal{M}$. We denote by $\textbf{I}$ and $\textbf{1}$
the units of algebras $\mathcal{N}$ and  $\mathcal{M}$ respectively; $\{\varepsilon_{ij}\}_{i,j=1,2}$ denote
the matrix unit of algebra $\mathcal{N}$ in the form
$$
\varepsilon_{11}= \left(\begin{array}{cc} \textbf{1} & 0\\
0 & 0 \end{array}\right),\ \varepsilon_{12}= \left(\begin{array}{cc} 0 & \textbf{1}\\
0 & 0 \end{array}\right),\ \varepsilon_{21}= \left(\begin{array}{cc} 0 & 0\\
\textbf{1} & 0 \end{array}\right),\ \varepsilon_{22}= \left(\begin{array}{cc} 0 & 0\\
0 & \textbf{1} \end{array}\right).$$

It is known [5, Prop. 1.18.1] that a commutative  $W^*$-algebra $\mathcal{M}$ may be
realized as algebra  $L^{\infty}(\Omega,\nu)$ of all essentially bounded locally $\nu$-measurable
functions on a localizable measure space ($\Omega, \nu)$ (i. e.  $\Omega$ is direct sum of finite measure
spaces, see [6]). In this case, the Banach space $L^1(\Omega,\nu)$ is the predual of
$L^{\infty}(\Omega,\nu)$.  Now  we shall identify  $\mathcal{M}$ with $L^{\infty}(\Omega,\nu)$.
In this case the characteristic functions
$$
\pi(\omega)\equiv\chi_{\pi}(\omega)=\left\{\begin{array}{ll}
1, &\text{if } \omega\in\pi,\\
0, & \text{if }\omega\not\in\pi,\end{array}\right.\qquad \pi\subset\Omega. $$
correspond to projections $\pi\in\mathcal{M}^{\pr}$. (We use the same letter $\pi$ to designate three objects:
a projection in  $\mathcal{M}^{\pr}$, a $\nu$-measurable set in $\Omega$ and  the characteristic function
of this set.)

Accordingly, the  $W^*$-algebra $\mathcal{N}=\mathcal{M}\otimes M_2$ is realized as the algebra of
$2\times 2$-matrices  $(x_{ij}),\ x_{ij}\in L^{\infty}(\Omega,\nu)$ with the predual
$$
\mathcal{N}_*=L^1(\Omega,\nu)\otimes(M_2)_*=L^1(\Omega,\nu, M_2)\eqno(1)$$
where $L^1(\Omega,\nu, M_2)$ is the Banach space of all $M_2$-valued Bochner $\nu$-integrable functions on
$\Omega$ [5, Prop. 1.22.12]. Elements of the space $M_2$ on the right-hand side of (1) can be viewed as density matrices.
In other words, elements $\varphi$ of $\mathcal{N}_*$ are the matrices
$$
\varphi=\left(\begin{array}{cc} \varphi_{11} & \varphi_{12}\\
\varphi_{21} & \varphi_{22} \end{array}\right),\quad \varphi_{ij}\in L^1(\Omega,\nu),\eqno(2)$$
and the action of $\varphi$ on an element $a=(a_{ij}),\ a_{ij}\in L^{\infty}(\Omega,\nu)$, is given
by the equality
$$
\varphi(a)=\int\limits_{\Omega}\tr[(\varphi_{ij})(a_{ij})]\,d\nu=\int\limits_{\Omega}(\varphi_{11}a_{11}
+\varphi_{12}a_{21}+\varphi_{21}a_{12}+\varphi_{22}a_{22})\,d\nu.$$
In this case
$$
\varphi(\pi\varepsilon_{ij})=\int\limits_{\pi}\varphi_{ji}\,d\nu,\quad\pi\in\mathcal{M}^{\pr},\ i,j=1,2,
\footnote{ We use the notations of type $a\varepsilon_{12}\equiv\left(\begin{array}{cc} 0 & a\\
0 & 0 \end{array}\right),\ a\in\mathcal{M}$.}$$
i. e.  $\varphi_{ij}=\frac{d}{d\nu}\varphi((\cdot)\varepsilon_{ji})$ is the Radon-Nikodym derivative
of the charge $\pi\rightarrow\varphi(\pi\varepsilon_{ji}),\ \pi\in\mathcal{M}^{\pr}$, with respect
to measure $\nu$.

\medskip   Let  $H$ be a complex Hilbert space. A bounded linear map
$F:\mathcal{A}\to H$ is said to be  \textit{an orthogonal vector field (OVF)} if
$$
pq=0\ (p,q\in\mathcal{A}^{\pr})\Rightarrow \langle F(p),F(q)\rangle =0. \eqno(3)$$
If a linear map $F$ with the property (3) is continuous in  $w^*$-topology on $\mathcal{A}$
and weak topology on $H$, then the map $F$ is referred to as \textit{$w^*$-continuous OVF}.
It  should be noted that a $w^*$-continuous OVF $F$ is  bounded, so that it is an OVF
[7, Corollary 3]. A $w^*$-continuous  OVF $F$ is said to be \textit{stationary}  [3], if there exist two
functionals $\varphi,\psi\in \mathcal{A}_*^+$ such that
$$
\langle F(x),F(y)\rangle= \varphi(y^*x)+\psi(xy^*),\quad x,y\in\mathcal{A}.\eqno(4)$$
Given OVF $F:\mathcal{A}\to H$ assign a positive linear functional $\varrho\in\mathcal{A}^*$,
$$
\varrho(x)\equiv\langle F(x),F(1)\rangle,\quad x\in\mathcal{A}\eqno(5)$$
($\varrho\in\mathcal{A}_*^+$ as soon as $F$ is $w^*$-continuous). The following equalities hold
$$
\|F(x)\|^2=\varrho(x^2),\quad x\in\mathcal{A}^{\sa}, \eqno(6)$$
$$
\RE\langle F(x),F(y)\rangle=\frac{1}{2}\varrho(xy+yx),\quad x,y\in \mathcal{A}^{\sa}.\eqno(7)$$
Equality (6) follows from the spectral theorem with regard to (3), and the following computation gives (7):
\begin{align}
\RE\langle F(x),F(y)\rangle&=\frac{1}{4}[\langle F(x+y),F(x+y)- F(x-y),F(x-y)\rangle]\nonumber\\
&=\frac{1}{4}[\varrho((x+y)^2)-\varrho((x-y)^2)]=\frac{1}{2}\varrho(xy+yx),\quad x,y\in \mathcal{A}^{\sa}.
\nonumber\end{align}
It is easily seen that for a stationary OVF the next equality is valid
$$
\varrho=\varphi+\psi.\eqno(8)$$

\bigskip
\centerline{\textbf{2. Orthogonal vector fields over ${\bf W^*}$-algebras of type ${\bf I_2}$}}

\bigskip Let $\mathcal{N}=\mathcal{M}\otimes M_2$ be a $W^*$-algebra of type $I_2$, $H$
be a Hilbert space over $\mathbb{C}$ and $F:\mathcal{N}\to H$ be a
 $H$-valued OVF. By the proof of Theorem 31.6 [4], the mappings $F_{ij}:\mathcal{M}\to H$
 given by the equalities
$$
F_{ij}(a)\equiv F(a\varepsilon_{ij}),\quad a\in\mathcal{M},\ i,j=1,2,\eqno(9)$$
are OVFs over $\mathcal{M}$. Moreover,
$$
F(x)=\sum_{i,j=1}^{2}F_{ij}(x_{ij}),\quad x=(x_{ij})\in\mathcal{N}.$$
Since the center of $\mathcal{N}$ is isomorphic to algebra  $\mathcal{M}$, we can consider
the orthogonal vector fields $F_{ij}$ as reductions $F$ on the center of  $\mathcal{N}$.

\begin{prop}
Let $F:\mathcal{N}\to H$ be a OVF and $F_{ij}$ are defined in  (9).
Then
\begin{itemize}
\item[(i)]\ $\langle F_{11}(a),F_{22}(b)\rangle=\langle F_{12}(a),F_{21}(b)\rangle=0,\quad a,b\in\mathcal{M}$,
\item[(ii)]\ $\|F_{12}(a)\|^2 + \|F_{21}(a)\|^2=\|F_{11}(a)\|^2+\|F_{22}(a)\|^2,\quad a\in\mathcal{M}$,
\item[(iii)] $\langle F_{ij}(a),F_{kl}(b)\rangle=\langle F_{ij}(b^*a),F_{kl}(\textbf{1})\rangle,\quad
a,b\in\mathcal{M},\ i,j,k,l=1,2$,
\item[(iv)] $\langle F_{12}(\pi),F_{11}(\textbf{1})\rangle=\langle F_{22}(\textbf{1}),F_{21}(\pi)\rangle,\
\langle F_{21}(\pi),F_{11}(\textbf{1})\rangle= \langle F_{22}(\textbf{1}),F_{12}(\pi)\rangle,\linebreak
\pi\in\mathcal{M}^{\pr}$.\end{itemize}
\end{prop}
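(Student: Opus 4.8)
The plan is to prove each identity first for projections and then extend it to all $a,b\in\mathcal M$ by sesquilinearity, norm–density of the simple functions in $\mathcal M=L^\infty(\Omega,\nu)$, and boundedness of $F$; the projection case is itself reduced to a computation inside a single corner $\pi\mathcal N\pi$ of $\mathcal N$. The starting point is a soft observation: \emph{if $P,Q\in\mathcal N^{\pr}$ with $PQ=0$, then $\langle F(x),F(y)\rangle=0$ for all $x\in P\mathcal NP$, $y\in Q\mathcal NQ$}. Indeed, subprojections $p\le P$, $q\le Q$ satisfy $pq=0$, so $\langle F(p),F(q)\rangle=0$ by (3); approximating selfadjoint elements of $P\mathcal NP$ (resp. $Q\mathcal NQ$) in norm by step operators $\sum_k\lambda_kp_k$ with $p_k\le P$ (resp. $\le Q$), and then splitting arbitrary $x,y$ into real and imaginary parts, gives the claim. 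Taking $P=\pi\mathbf I$, $Q=\sigma\mathbf I$ with $\pi,\sigma\in\mathcal M^{\pr}$ disjoint yields $\langle F_{ij}(\pi),F_{kl}(\sigma)\rangle=0$ for all matrix indices; and since $\pi\varepsilon_{11}\cdot\sigma\varepsilon_{22}=0$ for \emph{all} $\pi,\sigma$, the identity $\langle F_{11}(a),F_{22}(b)\rangle=0$ of (i) follows after the density extension. What remains is: the identity $\langle F_{12}(a),F_{21}(b)\rangle=0$ of (i) for overlapping $\pi,\sigma$ (in particular for $\pi=\sigma$), (ii), (iii) for overlapping arguments, and (iv).

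The key computation handles all of these at the level of a single projection. Fix $\pi\in\mathcal M^{\pr}$, put $a=F_{11}(\pi)$, $b=F_{22}(\pi)$, $c=F_{12}(\pi)$, $d=F_{21}(\pi)$, and for real parameters $\theta,\varphi$ form the projections of $\mathcal N$
$$P_{\theta,\varphi}=\left(\begin{array}{cc}\cos^2\theta\,\pi & \cos\theta\sin\theta\,e^{i\varphi}\pi\\ \cos\theta\sin\theta\,e^{-i\varphi}\pi & \sin^2\theta\,\pi\end{array}\right),$$
noting $P_{\theta,\varphi}+P_{\pi/2-\theta,\varphi+\pi}=\pi\mathbf I$, so these two are orthogonal. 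By linearity $F(P_{\theta,\varphi})=\cos^2\theta\,a+\sin^2\theta\,b+\cos\theta\sin\theta\,(e^{i\varphi}c+e^{-i\varphi}d)$. Expanding $\langle F(P_{\theta,\varphi}),F(P_{\pi/2-\theta,\varphi+\pi})\rangle=0$ and using $\langle a,b\rangle=0$ (just proved), the left side becomes a trigonometric polynomial in $\varphi$ of degree $2$ that vanishes for every $\theta$; equating its Fourier coefficients to zero gives: from the $e^{\pm2i\varphi}$ terms, $\langle c,d\rangle=0$; from the constant term, $\|a\|^2+\|b\|^2=\|c\|^2+\|d\|^2$; and from the $e^{\pm i\varphi}$ terms, $\cos^2\theta\,(\langle c,b\rangle-\langle a,d\rangle)+\sin^2\theta\,(\langle c,a\rangle-\langle b,d\rangle)=0$ for all $\theta$, hence $\langle c,a\rangle=\langle b,d\rangle$ and $\langle c,b\rangle=\langle a,d\rangle$. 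That is, for every $\pi\in\mathcal M^{\pr}$: $\langle F_{12}(\pi),F_{21}(\pi)\rangle=0$; $\|F_{11}(\pi)\|^2+\|F_{22}(\pi)\|^2=\|F_{12}(\pi)\|^2+\|F_{21}(\pi)\|^2$; $\langle F_{12}(\pi),F_{11}(\pi)\rangle=\langle F_{22}(\pi),F_{21}(\pi)\rangle$; and $\langle F_{21}(\pi),F_{11}(\pi)\rangle=\langle F_{22}(\pi),F_{12}(\pi)\rangle$.

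It remains to assemble and globalise. For the second identity of (i): writing $\pi=\pi\sigma+(\pi-\pi\sigma)$, $\sigma=\pi\sigma+(\sigma-\pi\sigma)$ as sums of disjoint projections and expanding $\langle F_{12}(\pi),F_{21}(\sigma)\rangle$, the cross terms vanish by the observation and the diagonal term by the key computation, so $\langle F_{12}(\pi),F_{21}(\sigma)\rangle=0$; since $(a,b)\mapsto\langle F_{12}(a),F_{21}(b)\rangle$ is bounded sesquilinear and vanishes on pairs of projections, it vanishes on $\mathcal M\times\mathcal M$. For (ii): one checks $\|F_{ij}(a)\|=\|F_{ij}(|a|)\|$ on simple functions (using that the $F_{ij}$ are orthogonal vector fields over $\mathcal M$) and passes to the limit, reducing (ii) to $|a|$, a norm–limit of nonnegative simple functions, for which (ii) holds termwise by the projection case. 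For (iii): decompose $\pi=\pi\sigma+\pi'$, $\sigma=\pi\sigma+\sigma'$, $\mathbf1=\pi\sigma+\pi'+\sigma'+(\mathbf1-\pi-\sigma+\pi\sigma)$ into disjoint projections; by the observation both $\langle F_{ij}(\pi),F_{kl}(\sigma)\rangle$ and $\langle F_{ij}(\sigma^*\pi),F_{kl}(\mathbf1)\rangle$ reduce to $\langle F_{ij}(\pi\sigma),F_{kl}(\pi\sigma)\rangle$, and sesquilinearity plus density extend this to all $a,b$. For (iv): since $\pi$ and $\mathbf I-\pi$ are disjoint, the observation gives $\langle F_{12}(\pi),F_{11}(\mathbf1)\rangle=\langle F_{12}(\pi),F_{11}(\pi)\rangle$ and $\langle F_{22}(\mathbf1),F_{21}(\pi)\rangle=\langle F_{22}(\pi),F_{21}(\pi)\rangle$, whose equality is the third line of the key computation; the other identity of (iv) is the fourth line the same way.

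The main difficulty is the key computation. Everything that localises to disjoint supports, as well as $\langle F_{11},F_{22}\rangle=0$, is soft; but $\langle F_{12}(\pi),F_{21}(\pi)\rangle=0$ for a single, possibly atomic, projection cannot be split into disjoint pieces and genuinely requires the phase $\varphi$, while (iv) — the only assertion interchanging the matrix labels $1\leftrightarrow2$ — requires in addition the tilt $\theta$. Squeezing all of the non-disjoint content out of the single relation $\langle F(P_{\theta,\varphi}),F(P_{\pi/2-\theta,\varphi+\pi})\rangle=0$, by Fourier analysis in $\varphi$ and matching of powers of $\cos\theta$, is where the real effort lies; the remaining steps are disjoint-decomposition bookkeeping and a routine density argument.
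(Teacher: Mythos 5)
Your proof is correct, but it takes a genuinely different route from the paper's. The paper splits the hard second identity of (i) into two cases: for $\sigma\tau=0$ it writes $\sigma\varepsilon_{12}$ and $\tau\varepsilon_{21}$ as linear combinations of four projections (its formula (10)), and for $\sigma=\tau$ it runs a polarization computation built on the functional $\varrho(x)=\langle F(x),F(\mathbf{I})\rangle$ and identities (6)--(7); (ii) is then a direct computation with $\varrho$, (iii) goes through simple functions over a common partition using the orthogonality relations (11), and (iv) is deduced from the auxiliary identity (12), itself proved via (iii) and (7) with $a=\mathbf{1}$ and $a=\im\mathbf{1}$. You instead prove one corner lemma ($PQ=0$ implies $F(P\mathcal{N}P)\perp F(Q\mathcal{N}Q)$), which subsumes the paper's (10)/(11), and then squeeze all of the non-disjoint content out of the single relation $\langle F(P_{\theta,\varphi}),F(P_{\pi/2-\theta,\varphi+\pi})\rangle=0$: I checked the coefficient extraction, and indeed the $e^{\pm 2\im\varphi}$ mode gives $\langle F_{12}(\pi),F_{21}(\pi)\rangle=0$, the constant mode gives (ii) on projections, and the $e^{\pm \im\varphi}$ mode gives $st\bigl[t^2(\langle c,b\rangle-\langle a,d\rangle)+s^2(\langle c,a\rangle-\langle b,d\rangle)\bigr]=0$, whence both brackets vanish, which is exactly what (iv) needs once the disjoint pieces $\langle F_{12}(\pi),F_{11}(\mathbf{1}-\pi)\rangle$ etc.\ are discarded by the corner lemma; the subsequent globalizations by disjoint decomposition, sesquilinearity and norm density of simple functions (including the $\|F_{ij}(a)\|=\|F_{ij}(|a|)\|$ reduction for (ii)) are all valid. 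What your route buys is economy and uniformity: you never need the functional $\varrho$, the identities (6)--(7), nor the explicit decomposition (10), and the two-parameter family of complementary projections handles the coincident-support case and the label-interchanging identity (iv) in one stroke. What the paper's route buys is that the stronger identity (12) is obtained along the way and is reused verbatim in the proof of Theorem 3; with your argument one recovers (12) only indirectly, via Prop.~2, from (iii) and (iv).
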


\begin{proof}
 Let us verify the second equality in (i). In view of the spectral theorem
 it suffices to prove the assertion in case  $a=\sigma,\ b=\tau$ are projections in  $\mathcal{M}^{\pr}$.
Let us first assume that  $\sigma\tau=0$. We have
$$
\left(\begin{array}{cc} 0 & \sigma\\
0 & 0 \end{array}\right) = \frac{1}{4}\sum_{\omega=\pm1,\pm\im}^{}\omega\left(\begin{array}{cc}
 \sigma & \overline{\omega}\sigma\\
\omega\sigma & \sigma \end{array}\right),\ \left(\begin{array}{cc} 0 & 0\\
\tau & 0 \end{array}\right)= \frac{1}{4}\sum_{\omega}^{}\omega\left(\begin{array}{cc} \tau &\omega\tau\\
 \overline{\omega}\tau & \tau \end{array}\right),\eqno(10)$$
where in right-hand sides of (10) there are linear combinations of projections from $\mathcal{N}$.
It now follows in view of (3) that  $\sigma\tau=0\Rightarrow \langle F_{12}(\sigma),F_{21}(\tau)\rangle=0$.

Now we suppose that $\sigma=\tau$. By the equalities
$$
\left(\begin{array}{cc} 0 & \sigma\\
0 & 0 \end{array}\right) = \frac{1}{2}\left(\begin{array}{cc} 0 & \sigma\\
\sigma & 0 \end{array}\right)+\frac{\im}{2}\left(\begin{array}{cc} 0 & -\im\sigma\\
\im\sigma & 0 \end{array}\right),$$
$$ \left(\begin{array}{cc} 0 & 0\\
\sigma & 0 \end{array}\right)= \frac{1}{2}\left(\begin{array}{cc} 0 & \sigma\\
\sigma & 0 \end{array}\right)+\frac{\im}{2}\left(\begin{array}{cc} 0 & \im\sigma\\
-\im\sigma & 0 \end{array}\right)$$
we have
\begin{align}
4&\langle F_{12}(\sigma),F_{21}(\sigma)\rangle =4\left\langle F\left(\begin{array}{cc} 0 & \sigma\\
0 & 0 \end{array}\right),F\left(\begin{array}{cc} 0 & 0\\
\sigma & 0 \end{array}\right)\right\rangle\nonumber\\
&=\left\|F\left(\begin{array}{cc} 0 & \sigma\\
\sigma & 0 \end{array}\right)\right\|^2
+\left\langle F\left(\begin{array}{cc} 0 & -\im\sigma\\
\im\sigma & 0 \end{array}\right),F\left(\begin{array}{cc} 0 & \im\sigma\\
-\im\sigma & 0 \end{array}\right)\right\rangle\nonumber\\
& -i\left\langle F\left(\begin{array}{cc} 0 & \sigma\\
\sigma & 0 \end{array}\right),F\left(\begin{array}{cc} 0 & \im\sigma\\
-\im\sigma & 0 \end{array}\right)\right\rangle+\im\left\langle F\left(\begin{array}{cc} 0 & -\im\sigma\\
\im\sigma & 0 \end{array}\right),F\left(\begin{array}{cc} 0 & \sigma\\
\sigma & 0 \end{array}\right)\right\rangle \nonumber\\
&=\left\|F\left(\begin{array}{cc} 0 & \sigma\\
\sigma & 0 \end{array}\right)\right\|^2-\left\|F\left(\begin{array}{cc} 0 & -\im\sigma\\
\im\sigma & 0 \end{array}\right)\right\|^2\nonumber\\
&+\im\left\{\left\langle F\left(\begin{array}{cc} 0 & \sigma\\
\sigma & 0 \end{array}\right),F\left(\begin{array}{cc} 0 & -\im\sigma\\
\im\sigma & 0 \end{array}\right)\right\rangle+\left\langle F\left(\begin{array}{cc} 0 & -\im\sigma\\
\im\sigma & 0 \end{array}\right),F\left(\begin{array}{cc} 0 & \sigma\\
\sigma & 0 \end{array}\right)\right\rangle \right\}.\nonumber\end{align}
In addition (see (6),(7)),
\begin{align}
\left\|F\left(\begin{array}{cc} 0 & -\im\sigma\\
\im\sigma & 0 \end{array}\right)\right\|^2&=\left\langle F\left(\left(\begin{array}{cc} 0 & -\im\sigma\\
\im\sigma & 0 \end{array}\right)^2\right), F(\textbf{I})\right\rangle\nonumber\\
&=\left\langle F\left(\begin{array}{cc} \sigma & 0\\
0 & \sigma \end{array}\right), F(\textbf{I})\right\rangle
=\left\|F\left(\begin{array}{cc} 0 & \sigma\\
\sigma & 0 \end{array}\right)\right\|^2,\nonumber\end{align}
The expression in braces vanishes:
\begin{align}
\left\{...\right\}&=2\RE\left\langle F\left(\begin{array}{cc} 0 & \sigma\\
\sigma & 0 \end{array}\right),F\left(\begin{array}{cc} 0 & -\im\sigma\\
\im\sigma & 0 \end{array}\right)\right\rangle\nonumber\\
&=\varrho\left[\left(\begin{array}{cc} 0 & \sigma\\
\sigma & 0 \end{array}\right)\left(\begin{array}{cc} 0 & -\im\sigma\\
\im\sigma & 0 \end{array}\right)+\left(\begin{array}{cc} 0 & -\im\sigma\\
\im\sigma & 0 \end{array}\right)\left(\begin{array}{cc} 0 & \sigma\\
\sigma & 0 \end{array}\right)\right]=0,\nonumber\end{align}
and the second equality in (i) is  established. The equality
$\langle F_{11}(a),F_{22}(b)\rangle=0\ (a,b\in\mathcal{M})$ is obvious in view of the spectral theorem.

\medskip Equality  (ii) follows from computation (with regard to (i))
\begin{align}
\|F_{12}(a)\|^2&+ \|F_{21}(a)\|^2=\|F_{12}(a)\|^2+ \|F_{21}(a^*)\|^2=\|F_{12}(a)+ F_{21}(a^*)\|^2\nonumber\\
&=\left\langle F\left(\begin{array}{cc} 0 & a\\
a^* & 0 \end{array}\right),F\left(\begin{array}{cc} 0 & a\\
a^* & 0 \end{array}\right)\right\rangle
= \left\langle F\left(\left(\begin{array}{cc} 0 & a\\
a^* & 0 \end{array}\right)^2\right),F(\textbf{I})\right\rangle\nonumber\\
&=\langle F_{11}(a^*a),F_{11}(\textbf{1})\rangle+\langle F_{22}(a^*a),F_{22}(\textbf{1})\rangle
=\|F_{11}(a)\|^2+ \|F_{22}(a)\|^2.\nonumber\end{align}
We used here (for $G=F_{21}$) the following property of the OVF over commutative
 $W^*$-algebra $\mathcal{M}$ (it follows easily from the spectral theorem):
$$
\|G(a)\|^2=\|G(a^*)\|^2, \quad a\in\mathcal{M}.$$
Let us establish (iii).  We first note that representations (10) give
$$
\sigma\tau=0\ (\sigma,\tau\in\mathcal{M}^{\pr})\Rightarrow \langle F_{ii}(\sigma),F_{12}(\tau)\rangle=
 \langle F_{ii}(\sigma),F_{21}(\tau)\rangle
=0,\quad i=1,2.\eqno(11)$$
In view of the  spectral theorem and boundedness of linear maps $F_{ij}$ it is sufficient
to establish (iii) for elements  $a,b$ in the form
$$
a=\sum_{t}\mu_t\pi_t,\ b=\sum_{t}\lambda_t\pi_t\quad(\lambda_t,\mu_t\in \mathbb{C},\ \pi_t
\in\mathcal{M}^{\pr},\ \pi_t\pi_s=0\ (t\ne s)).$$
(Here the sums are finite and  $\sum\limits_t\pi_t=\textbf{1}$.) Taking into account (i) and (11), we have
\begin{align}
\langle F_{ij}(a),F_{kl}(b)\rangle &=\sum_{t,s}\mu_t\overline{\lambda_s}\langle F_{ij}(\pi_t),F_{kl}(\pi_s)\rangle
=\sum_{t}\mu_t\overline{\lambda_t}\langle F_{ij}(\pi_t),F_{kl}(\textbf{1})\rangle\nonumber\\
&=\langle F_{ij}(\sum_{t}\mu_t\overline{\lambda_t}\pi_t),F_{kl}(\textbf{1})\rangle
=\langle F_{ij}(b^*a),F_{kl}(\textbf{1})\rangle.\nonumber\end{align}

\medskip (iv). We show first that
$$
\langle F_{12}(a\pi)+F_{21}(a^*\pi),F_{11}(\pi)\rangle=\langle F_{22}(\pi),F_{12}(a\pi)+F_{21}(a^*\pi)\rangle,
\quad a\in\mathcal{M},\ \pi\in\mathcal{M}^{\pr}.\eqno(12)$$
Define  $f= F_{12}(a\pi)+F_{21}(a^*\pi)=F{\small\left(\begin{array}{cc} 0 & a\pi\\
a^*\pi & 0 \end{array}\right)}$.  By virtue of (iii) and (7)
\begin{align}
\langle f&,F_{11}(\pi)\rangle =\langle f,F(\textbf{I} )\rangle -[ \langle f,F_{22}(\pi)\rangle +
\langle F_{22}(\pi),f\rangle]+\langle F_{22}(\pi),f\rangle \nonumber\\
&=\langle f,F(\textbf{I} )\rangle -2\RE{\small\left\langle F\left(\begin{array}{cc} 0 & a\pi\\
a^*\pi & 0 \end{array}\right), F\left(\begin{array}{cc} 0 & 0\\
 0 & \pi \end{array}\right)\right\rangle}+\langle F_{22}(\pi),f\rangle\nonumber\\
& =\varrho(a\pi\varepsilon_{12}+a^*\pi\varepsilon_{21})-\varrho((a\pi\varepsilon_{12}+a^*\pi\varepsilon_{21})
\cdot\pi\varepsilon_{22}+\pi\varepsilon_{22}\cdot(a\pi\varepsilon_{12}+a^*\pi\varepsilon_{21}))\nonumber\\
&+\langle F_{22}(\pi),f\rangle =\langle F_{22}(\pi),f\rangle,\nonumber\end{align}
and (12) is established. By setting  $a=\textbf{1}$ and $a=\im\!\textbf{1}$ in  (12), we have
$$
\langle F_{12}(\pi)+F_{21}(\pi),F_{11}(\pi)\rangle=\langle F_{22}(\pi),F_{12}(\pi)+F_{21}(\pi)\rangle,
\eqno(13)$$
$$
\im\langle F_{12}(\pi)-F_{21}(\pi),F_{11}(\pi)\rangle=-\im\langle F_{22}(\pi),F_{12}(\pi)-F_{21}(\pi)\rangle.
\eqno(14)$$
Multiplying both sides of  (14) by $\im$ and taking into account (13), we have (iv). This proves
the proposition.
\end{proof}

\begin{prop}
Let $F_{ij}\ (i=1,2)$ be OVFs
satisfying conditions  $(iii)$ and  $(iv)$ of Prop. 1.  Then equality  $(12)$ holds.
\end{prop}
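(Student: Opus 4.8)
The plan is to use condition (iii) to replace the projection $\pi$ by the unit $\textbf{1}$ in the ``outer'' arguments of the inner products occurring in (12), thereby reducing (12) to an identity that will follow from an extension of condition (iv) from projections to all of $\mathcal{M}$. Concretely, since $\mathcal{M}$ is commutative and $\pi^{2}=\pi=\pi^{*}$, condition (iii) gives, for instance, $\langle F_{12}(a\pi),F_{11}(\pi)\rangle=\langle F_{12}(\pi\cdot a\pi),F_{11}(\textbf{1})\rangle=\langle F_{12}(a\pi),F_{11}(\textbf{1})\rangle$; applying (iii) to the three remaining inner products in (12) — and passing to complex conjugates in the two where the projection sits in the first argument, so that $\langle F_{22}(\pi),F_{12}(a\pi)\rangle=\langle F_{22}(\textbf{1}),F_{12}(a\pi)\rangle$, and similarly for the two terms with $F_{21}(a^{*}\pi)$ — shows that (12) is equivalent to
\begin{gather*}
\langle F_{12}(a\pi),F_{11}(\textbf{1})\rangle+\langle F_{21}(a^{*}\pi),F_{11}(\textbf{1})\rangle\\
=\langle F_{22}(\textbf{1}),F_{12}(a\pi)\rangle+\langle F_{22}(\textbf{1}),F_{21}(a^{*}\pi)\rangle.
\end{gather*}

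Next I would extend (iv) to arbitrary arguments. The maps $c\mapsto\langle F_{12}(c),F_{11}(\textbf{1})\rangle$ and $c\mapsto\langle F_{22}(\textbf{1}),F_{21}(c^{*})\rangle$ are both bounded and $\mathbb{C}$-linear on $\mathcal{M}$ — in the second one the involution $c\mapsto c^{*}$ and the conjugate-linearity of the second slot of the inner product cancel each other — and by the first equality of (iv) they coincide on every $\pi\in\mathcal{M}^{\pr}$; since finite linear combinations of projections are norm-dense in $\mathcal{M}=L^{\infty}(\Omega,\nu)$, they coincide on all of $\mathcal{M}$. In the same way the second equality of (iv) yields $\langle F_{21}(c),F_{11}(\textbf{1})\rangle=\langle F_{22}(\textbf{1}),F_{12}(c^{*})\rangle$ for every $c\in\mathcal{M}$.

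Finally, putting $c=a\pi$ in the first extended identity gives $\langle F_{12}(a\pi),F_{11}(\textbf{1})\rangle=\langle F_{22}(\textbf{1}),F_{21}(a^{*}\pi)\rangle$, and putting $c=a^{*}\pi$ in the second gives $\langle F_{21}(a^{*}\pi),F_{11}(\textbf{1})\rangle=\langle F_{22}(\textbf{1}),F_{12}(a\pi)\rangle$; adding these two equalities produces exactly the reduced form of (12) displayed above, hence (12) itself. The manipulations with (iii) are purely formal; the one step that genuinely needs care is the extension of (iv) — one has to check that, after the $*$-operation is applied, the functionals in question are honestly $\mathbb{C}$-linear rather than conjugate-linear, which is precisely what makes their agreement on projections force agreement on all of $\mathcal{M}$. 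That is essentially the only obstacle.
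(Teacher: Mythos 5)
Your argument is correct and rests on essentially the same ingredients as the paper's proof: condition (iii) to normalize arguments to $\textbf{1}$, condition (iv) applied on projections, norm-density of the linear span of projections in $\mathcal{M}$ together with boundedness of the $F_{ij}$, and the key bookkeeping that the $*$-operation cancels the conjugate-linearity of the second slot. The only difference is presentational: you package the density step as an a priori extension of (iv) to all of $\mathcal{M}$ and then substitute $c=a\pi$, $c=a^{*}\pi$, whereas the paper computes directly with simple functions $a=\sum_{s}\lambda_{s}\pi_{s}$ and folds the scalars $\overline{\lambda_{s}}$ into the second slot.
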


\begin{proof}
Because the linear mappings  $F_{ij}$ are bounded, it is sufficient to prove
(12) for finite sums in the form  $a=\sum\limits_{s}\lambda_s\pi_s$ where $\lambda_s\in\mathbb{C}, \pi_s\in
\mathcal{M}^{\pr},\ \pi_s\pi_t=0\ (s\ne t), \sum\limits_{s}\pi_s=\textbf{1}$. In view of  (iii) and (iv) we
have
\begin{align}
\langle F_{12}(a\pi)+F_{21}(a^*\pi)&,F_{11}(\pi)\rangle=\sum_{s}[\lambda_s
\langle F_{12}(\pi\pi_s), F_{11}(\textbf{1})\rangle +\overline{\lambda_s}\langle F_{21}(\pi\pi_s),
F_{11}(\textbf{1})\rangle]\nonumber\\
&=\sum_{s}[\lambda_s \langle F_{22}(\textbf{1}), F_{21}(\pi\pi_s)\rangle +\overline{\lambda_s}\langle F_{22}(\textbf{1}),
F_{12}(\pi\pi_s)\rangle]\nonumber\\
&=\langle F_{22}(\pi), F_{21}(\sum_{s}\overline{\lambda_s} \pi\pi_s)\rangle+
\langle F_{22}(\pi), F_{12}(\sum_{s}\lambda_s \pi\pi_s)\rangle\nonumber\\
&=\langle F_{22}(\pi), F_{21}(a^*\pi)+F_{12}(a\pi)\rangle.\nonumber\end{align}
The proposition follows.
\end{proof}

\begin{theorem}
Let $\mathcal{N}=\mathcal{M}\otimes M_2$ be a $W^*$-algebra of type $I_2$,
$F_{ij}: \mathcal{M}\rightarrow H$ be $H$-valued OVFs ($w^*$-continuous OVFs) with the properties  $(i) - (iv)$
of Prop. 1. Then the linear mapping $F:\mathcal{N}\rightarrow H$ defined by the equalities
$$
F(a\varepsilon_{ij})\equiv F_{ij}(a),\quad a\in\mathcal{M},\ i,j=1,2,$$
is the OVF (respectively,  $w^*$-continuous OVF).
\end{theorem}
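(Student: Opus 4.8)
The plan is to verify directly that $F$ satisfies the defining property $(3)$ of an OVF, after first putting the given pair of orthogonal projections into a standard form. To begin: $F$ is well defined and linear because $\mathcal{N}=\bigoplus_{i,j}\mathcal{M}\varepsilon_{ij}$ as a Banach space, so $F(x)=\sum_{i,j}F_{ij}(x_{ij})$ for $x=(x_{ij})\in\mathcal{N}$, and it is bounded as a finite sum of bounded maps. In the $w^{*}$-continuous case each coordinate map $x\mapsto x_{ij}$ is normal (for instance $x_{12}\varepsilon_{12}=\varepsilon_{11}x\varepsilon_{22}$), hence $F$ is continuous from the $w^{*}$-topology on $\mathcal{N}$ to the weak topology on $H$; by [7, Corollary 3] it is then automatically bounded, so in either case it remains only to check $(3)$. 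So fix $p,q\in\mathcal{N}^{\pr}$ with $pq=0$; the goal is $\langle F(p),F(q)\rangle=0$.

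Next I would carry out a structural reduction. Record the auxiliary fact: if $x=zx$ and $y=(\textbf{I}-z)y$ for a central projection $z$, then $\langle F(x),F(y)\rangle=0$, since expanding and applying $(iii)$ turns every summand into $\langle F_{ij}(y_{kl}^{*}x_{ij}),F_{kl}(\textbf{1})\rangle$ with $y_{kl}^{*}x_{ij}=0$. Because $\mathcal{N}$ is of type $I_{2}$, decompose $\textbf{I}$ into finitely many central projections on which $p$, and then also $q$, has central dimension $0$, $1$, or $2$, and pass to the common refinement. Every summand on which $p$ or $q$ has dimension $0$, or on which one of $p,q$ is the unit of that summand, contributes $0$ — by the auxiliary fact together with $F(0)=0$ and the observation that orthogonality then forces the partner to vanish there. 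The only surviving summand is the one on which $p$ and $q$ are both of central dimension $1$; on it $q=\textbf{I}_{\zeta}-p$ is the orthocomplement of $p$ in the unit $\textbf{I}_{\zeta}$ of the summand. Restricting to this summand (the reduced fields $a\mapsto F_{ij}(\zeta a)$, $\zeta\in\mathcal{M}^{\pr}$, again satisfy $(i)$--$(iv)$; for $(iv)$ one uses $(iii)$), the problem becomes to show
$$\|F(p)\|^{2}=\langle F(p),F(\textbf{I})\rangle\qquad\bigl(\text{equivalently }\langle F(p),F(\textbf{I}-p)\rangle=0\bigr)$$
for every projection $p=\begin{pmatrix}u & c\\ \bar c & \textbf{1}-u\end{pmatrix}$ of $\mathcal{N}$ of central dimension $1$, where $0\le u\le\textbf{1}$ and $|c|^{2}=u(\textbf{1}-u)$ — this being the general form of a projection of $\mathcal{M}\otimes M_{2}$ having no central direct summand on which it equals $0$ or $\textbf{I}$.

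For this identity I would write $F(p)=F_{11}(u)+F_{22}(\textbf{1}-u)+f$ with $f:=F_{12}(c)+F_{21}(\bar c)$, use $F(\textbf{I})=F_{11}(\textbf{1})+F_{22}(\textbf{1})$, and abbreviate $\varrho_{ij}(a):=\langle F_{ij}(a),F_{ij}(\textbf{1})\rangle$. Using $(i)$ (in particular $\langle F_{12}(\cdot),F_{21}(\cdot)\rangle=0$ identically), the special case $\langle F_{ij}(a),F_{ij}(b)\rangle=\varrho_{ij}(b^{*}a)$ of $(iii)$, property $(ii)$, and $|c|^{2}=u-u^{2}$, one finds that $\|F(p)\|^{2}$ and $\langle F(p),F(\textbf{I})\rangle$ both equal $\varrho_{11}(u)+\varrho_{22}(\textbf{1}-u)$ plus, respectively, $2\RE\langle F_{11}(u)+F_{22}(\textbf{1}-u),f\rangle$ and $\langle f,F(\textbf{I})\rangle$. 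Since $(12)$ holds by Proposition~2, evaluating it at $\pi=\textbf{1}$ gives $\langle f,F(\textbf{I})\rangle=2\RE\langle f,F_{22}(\textbf{1})\rangle$, so the identity reduces to the single assertion
$$\RE\langle f,\ F_{11}(u)-F_{22}(u)\rangle=0 .$$
This is where the combined force of $(iii)$ and $(iv)$ (i.e., of relation $(12)$) is used: by $(iii)$ one has $\langle f,F_{11}(u)\rangle=\langle g,F_{11}(\textbf{1})\rangle$ and $\langle f,F_{22}(u)\rangle=\langle g,F_{22}(\textbf{1})\rangle$ with $g:=F_{12}(uc)+F_{21}((uc)^{*})$, while $(12)$ at $\pi=\textbf{1}$ yields $\langle g,F_{11}(\textbf{1})\rangle=\overline{\langle g,F_{22}(\textbf{1})\rangle}$; hence $\langle f,F_{11}(u)-F_{22}(u)\rangle=\langle g,F_{11}(\textbf{1})\rangle-\langle g,F_{22}(\textbf{1})\rangle$ is purely imaginary, which is exactly the assertion. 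This completes the reduction and hence the proof; for the $w^{*}$-continuous version one adds the continuity statement from the first step.

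I expect the main obstacle to be the structural reduction rather than the final cancellation: pinning down the general form of a projection in $\mathcal{M}\otimes M_{2}$, organizing the central decomposition of the pair $(p,q)$ so that only the complementary dimension-one configuration survives, and checking that $(i)$--$(iv)$ pass to the reductions by $\pi\in\mathcal{M}^{\pr}$. One must also keep the (in principle sixteen-term) expansions of $\|F(p)\|^{2}$ and $\langle F(p),F(\textbf{I})\rangle$ under control so that everything funnels cleanly into $\RE\langle f,F_{11}(u)-F_{22}(u)\rangle=0$.
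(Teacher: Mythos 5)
Your proposal is correct, and it takes a genuinely different route from the paper's. The paper proves $pq=0\Rightarrow\langle F(p),F(q)\rangle=0$ by importing the canonical form of projections of $\mathcal{N}$ from [8, Lemma 1], $r=\pi_1\oplus\pi_2+p(a,v,\pi)$ with a unitary phase $v$, and the orthogonality relations (16) from [8, Lemma 2]; it then expands $\langle F(p),F(q)\rangle$ into twelve terms, cancels the diagonal ones pairwise by Prop.~1(ii), and groups the remainder into two brackets, each vanishing by (12) (Prop.~2). You instead dispose of the general pair by a central rank decomposition: your auxiliary fact (that (iii) annihilates inner products between elements supported on orthogonal central projections) is valid, the dimension-$0$ and dimension-$2$ pieces die for the reasons you give, and on the surviving piece $q$ is forced to be $\mathbf{I}_{\zeta}-p$ with $p$ fibrewise of rank one, written as $\left(\begin{smallmatrix} u & c\\ \bar c & \mathbf{1}-u\end{smallmatrix}\right)$, $|c|^{2}=u(\mathbf{1}-u)$, with no unitary needed. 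There the claim becomes $\|F(p)\|^{2}=\langle F(p),F(\mathbf{I})\rangle$, and I verified your bookkeeping: (i)--(iii) and (ii) reduce both sides to $\varrho_{11}(u)+\varrho_{22}(\mathbf{1}-u)$ plus the stated remainders, (12) at $\pi=\mathbf{1}$ gives $\langle f,F(\mathbf{I})\rangle=2\RE\langle f,F_{22}(\mathbf{1})\rangle$ and $\langle g,F_{11}(\mathbf{1})\rangle=\overline{\langle g,F_{22}(\mathbf{1})\rangle}$, so the final identity $\RE\langle f,F_{11}(u)-F_{22}(u)\rangle=0$ does follow; the reduced fields $a\mapsto F_{ij}(\zeta a)$ indeed inherit (i)--(iv) as you indicate, so Prop.~2 applies to them. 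What your route buys is independence from the external Lemmas 1--2 of [8] and a much leaner final computation (one real-part identity instead of a long cancellation); what it costs is that the structural input --- the measurable rank decomposition and the normal form of the rank-one part --- is only sketched (though both are elementary for $\mathcal{M}=L^{\infty}(\Omega,\nu)$ via fibrewise trace and determinant), and the term-by-term expansions are asserted rather than written out. Your opening remarks on linearity, boundedness and $w^{*}$-continuity cover what the paper leaves implicit.
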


\begin{proof}
 Since  $F$ is  $w^*$--continuous if and only if  $F_{ij}$ are $w^*$-continuous,
it is sufficient to  consider the case when  $F_{ij}$ be  OVFs.
We have only to verify the equality
$$
pq=0\ \Rightarrow\ \langle F(p),F(q)\rangle=0,\quad p,q\in\mathcal{N}^{\pr}.$$
We use the canonical representation projections in  $\mathcal{N}$.
Specifically, every projection $r\in\mathcal{N}^{\pr}$ can be expressed  in the  form
[8, Lemma 1]
$$
r= \pi_1\oplus\pi_2+p(a,v,\pi),\eqno(15)$$
where
$$
 \pi_1\oplus\pi_2\equiv\left(\begin{array}{cc} \pi_1 & 0\\
0 & \pi_2 \end{array}\right),\quad \pi_i\in\mathcal{M}^{\pr},$$
$$
p(a,v,\pi)\equiv\left(\begin{array}{cc} a\pi & v\pi(a(\textbf{1}-a))^{1/2}\\
v^*\pi(a(\textbf{1}-a))^{1/2} & (\textbf{1}-a)\pi \end{array}\right),\quad \pi\in\mathcal{M}^{\pr},
v\in\mathcal{M}^{\un}.$$
In  addition, $\pi_i\leq\textbf{1}-\pi\ (i=1,2),\ a\in\mathcal{M},\ 0\leq a\leq\textbf{1}, \rp a(\textbf{1}-a))
=\textbf{1}$. Now we present  $p,q\in\mathcal{N}^{\pr}$ in the form of (15):
$$
p=\tau_1\oplus\tau_2+p(a,v,\tau_3),\quad q=\sigma_1\oplus\sigma_2+p(b,w,\sigma_3),$$
where $\tau_i,\sigma_i\in\mathcal{M}^{\pr},\ 0\leq a,b\leq\textbf{1},\ v,w\in\mathcal{M}^{\un}$.
From $pq=0$ it follows [8, Lemma 2] that
$$
\tau_i\sigma_i=\tau_i\tau_3=\sigma_i\sigma_3=\tau_3\sigma_i=\tau_i\sigma_3=0,\
w\pi=-v\pi,\ b\pi=(\textbf{1}-a)\pi, \eqno(16)$$
where  $i=1,2,\ \pi\equiv\sigma_3\tau_3$. Denoting for brevity $\varkappa(x)\equiv(x(\textbf{1}-x))^{1/2}$ we have
\begin{align}
F(p)&=F_{11}(\tau_1+a\tau_3)+ F_{12}(v\varkappa(a)\tau_3) + F_{21}(v^*\varkappa(a)\tau_3)+
 F_{22}(\tau_2+(\textbf{1}-a)\tau_3),\nonumber\\
F(q)&=F_{11}(\sigma_1+b\sigma_3)+ F_{12}(w\varkappa(b)\sigma_3) + F_{21}(w^*\varkappa(b)\sigma_3)+
 F_{22}(\sigma_2+(\textbf{1}-b)\sigma_3).\nonumber\end{align}
Taking the scalar product of vectors $F(p)$ and $F(q)$ with regard to (i), (iii), (iv) in Prop. 1 and
equalities (16) we have
\begin{align}
\langle F(p), F(q)\rangle&=\langle F_{11}(a\pi), F_{11}((\textbf{1}-a)\pi)\rangle -
\langle F_{11}(a\pi), F_{12}(v\varkappa(a)\pi)\rangle\nonumber\\
&-\langle F_{11}(a\pi), F_{21}(v^*\varkappa(a)\pi)\rangle + \langle F_{12}(v\varkappa(a)\pi),
F_{11}((\textbf{1}-a)\pi)\rangle\nonumber\\
&-\|F_{12}(v\varkappa(a)\pi)\|^2 +\langle F_{12}(v\varkappa(a)\pi),
F_{22}(a\pi)\rangle\nonumber\\
&+\langle F_{21}(v^*\varkappa(a)\pi),  F_{11}((\textbf{1}-a)\pi)\rangle- \|F_{21}(v^*\varkappa(a)\pi)\|^2\nonumber\\
&+\langle F_{21}(v^*\varkappa(a)\pi),  F_{22}(a\pi),\rangle -
\langle F_{22}((\textbf{1}-a)\pi), F_{12}(v\varkappa(a)\pi)\rangle\nonumber\\
&-\langle F_{22}((\textbf{1}-a)\pi), F_{21}(v^*\varkappa(a)\pi)\rangle+
\langle F_{22}((\textbf{1}-a)\pi), F_{22}(a\pi)\rangle.\nonumber\end{align}
By Prop. 1(ii), the first and the last summands on the right hand-side of obtained equality
are mutually annihilated with the fifth and the eights ones. Grouping the remained summands and setting
$c=va\varkappa(a),\ d= v(\textbf{1}-a)\varkappa(a)$ we have by  (12)
\begin{align}
\langle F(p)&, F(q)\rangle=[\langle F_{12}(v\varkappa(a)\pi)+ F_{21}(v^*\varkappa(a)\pi), F_{22}(a\pi)\rangle\nonumber\\
&- \langle F_{11}(a\pi), F_{12}(v\varkappa(a)\pi)+ F_{21}(v^*\varkappa(a)\pi)\rangle]\nonumber\\
&+[\langle F_{12}(v\varkappa(a)\pi)+ F_{21}(v^*\varkappa(a)\pi), F_{11}((\textbf{1}-a)\pi)\rangle \nonumber\\
&-\langle F_{22}((\textbf{1}-a)\pi), F_{12}(v\varkappa(a)\pi)+ F_{21}(v^*\varkappa(a)\pi)\rangle]\nonumber\\
&=[\langle F_{12}(c\pi)+ F_{21}(c^*\pi), F_{22}(\pi)\rangle -
\langle F_{11}(\pi), F_{12}(c\pi)+ F_{21}(c^*\pi)\rangle]\nonumber\\
&+[\langle F_{12}(d\pi)+ F_{21}(d^*\pi), F_{11}(\pi)\rangle -
\langle F_{22}(\pi), F_{12}(d\pi)+ F_{21}(d^*\pi)\rangle]=0.\nonumber\end{align}
The proof is complete.
\end{proof}

\centerline{\textbf{3. The stationarity of  OVFs over  ${\bf W^*}$-algebras of type ${\bf I_2}$}}

\bigskip As an application of Theorem 3 we will show that every $w^*$-continuous OVF over an $W^*$-algebra of type $I_2$
is stationary. Let  $\mathcal{N} =\mathcal{M}\otimes M_2$ be a $W^*$-algebra of type $I_2$, $F:\mathcal{N}\rightarrow H$
be a $w^*$-continuous OVF  and the functional $\varrho$ be given by (5). Let us observe some properties
of  reductions (10).  With the above notations of Section 1 we have
$$
\varrho(\pi\varepsilon_{ii})=\langle F(\pi\varepsilon_{ii}), F(\textbf{I})\rangle=
\langle F(\pi\varepsilon_{ii}), F(\varepsilon_{ii})\rangle
=\langle F_{ii}(\pi), F_{ii}(\textbf{1})\rangle,\eqno(17)$$
where  $i=1,2,\ \pi\in\mathcal{M}^{\pr}$. Therefore,
$$
\rho_{ii}=\frac{d}{d\nu}\langle F_{ii}(\cdot), F_{ii}(\textbf{1})\rangle,\quad i=1,2.\eqno(18)$$
Similarly,
$$
\rho_{ij}=\frac{d}{d\nu}\langle F_{ji}(\cdot), F_{11}(\textbf{1})+F_{22}(\textbf{1})\rangle,\quad i,j=1,2\
(i\ne j).\eqno(19)$$
In addition there are defined functions  $r_{ij}\in L^1(\Omega,\nu)^+$ such that
$$
\langle F_{ij}(\pi), F_{ij}(\textbf{1})\rangle=\int\limits_{\pi}r_{ij}\,d\nu,
\quad r_{ij}=\frac{d}{d\nu}\langle F_{ij}(\cdot), F_{ij}(\textbf{1})\rangle,\quad i,j=1,2.\eqno(20)$$
In this connection
$$
r_{ii}=\varrho_{ii}\ (i=1,2),\quad r_{12}+r_{21}= \varrho_{11}+\varrho_{22}\ \text{ a. e.}\eqno(21)$$
(The second equality in (21) follows from Prop. 1(ii).)

If  $F$ is stationary there exist (see (4))  $\varphi,\psi\in\mathcal{N}^+_*$ such that
$$
\langle F(x),F(y)\rangle= \varphi(y^*x)+\psi(xy^*),\quad x,y\in\mathcal{N}.\eqno(22)$$
In this case (8) is satisfied. Let $(\varphi_{ij})$ be the matrix of  $\varphi$
(see (2)). We have
\begin{align}
\int\limits_{\pi}\varphi_{21}\,d\nu&=\varphi(\pi\varepsilon_{12})=
\varphi(\varepsilon_{11}(\pi\varepsilon_{12})\varepsilon_{22})=\langle F((\pi\varepsilon_{12})\varepsilon_{22}),
F(\varepsilon_{11})\rangle\nonumber\\
&=\langle F(\pi\varepsilon_{12}),F(\varepsilon_{11})\rangle=\langle F_{12}(\pi),F_{11}(\textbf{1})\rangle,\quad
\pi\in\mathcal{M}^{\pr}.\nonumber\end{align}
Thus,
$$
\varphi_{21}=\frac{d}{d\nu}\langle F_{12}(\cdot),F_{11}(\textbf{1})\rangle,\quad
\varphi_{12}=\overline{\varphi_{21}}=\frac{d}{d\nu}\langle F_{21}(\cdot),F_{22}(\textbf{1})\rangle.$$
(The equality $\varphi_{12}=\overline{\varphi_{21}}$ follows from the positivity of  $\varphi$.)
Further,
\begin{align}
\int\limits_{\pi}(\varphi_{11}-\varphi_{22})\,d\nu&=\varphi(\pi\varepsilon_{11}-\pi\varepsilon_{22})=
\varphi(\pi\varepsilon_{12}\varepsilon_{12}^*)-\varphi(\pi\varepsilon_{12}^*\varepsilon_{12})\nonumber\\
&=\varphi(\pi\varepsilon_{12}\varepsilon_{12}^*)+\psi(\pi\varepsilon_{12}^*\varepsilon_{12})-
\varrho(\pi\varepsilon_{12}^*\varepsilon_{12})\nonumber\\
&=\langle F(\pi\varepsilon_{12}^*),F(\pi\varepsilon_{12}^*)\rangle-\varrho(\pi\varepsilon_{22})
\nonumber\\
&=\|F(\pi\varepsilon_{21})\|^2-\varrho(\pi\varepsilon_{22})=\langle F_{21}(\pi),F_{21}(\textbf{1})\rangle
-\int\limits_{\pi}\varrho_{22}\,d\nu\nonumber\\
&=\int\limits_{\pi}(r_{21}-\varrho_{22})\,d\nu.\nonumber\end{align}
(The function  $r_{21}$ is defined by (20).) Putting $\phi\equiv\varphi_{11}$ we have
$$
\varphi=\left(\begin{array}{cc}\phi & \varphi_{12}\\
\overline{\varphi_{12}} & \phi+\varrho_{22}-r_{21}\end{array}\right),\qquad
\psi=\left(\begin{array}{cc}\varrho_{11}-\phi & \varrho_{12}-\varphi_{12}\\
\overline{\varrho_{12}}-\overline{\varphi_{12}} &r_{21}- \phi\end{array}\right).\eqno(23)$$
It  should be noted  that all elements of matrices $\varphi$ and $\psi$, with the exception of $\phi$,
are defined as  the Radon-Nikodym derivatives associated with the fields  $F_{ij}$.
The function  $\phi\in L^1(\Omega,\nu)^+$  necessarily satisfies inequalities
$$
\max\{0,r_{21}(\omega)-\varrho_{22}(\omega)\}\leq\phi(\omega)\leq\min\{\varrho_{11}(\omega),r_{21}(\omega)\}\quad
\text{a.~e.},\eqno(24)$$
$$
|\varphi_{12}(\omega)|\leq[\phi(\omega)(\phi(\omega)+\varrho_{22}(\omega)-r_{21}(\omega))]^{1/2}\quad
\text{a.~e.},\eqno(25)$$
$$
|\varrho_{12}(\omega)-\varphi_{12}(\omega)|\leq[(\rho_{11}(\omega)-\phi(\omega))(r_{21}(\omega)-\phi(\omega))]^{1/2}
\quad \text{a.~e.}\eqno(26)$$

\begin{theorem}
Let $\mathcal{N} =\mathcal{M}\otimes M_2$ be a $W^*$-algebra of type $I_2$.
Then every  $w^*$-continuous OVF $F:\mathcal{N}\rightarrow H$ is stationary.
\end{theorem}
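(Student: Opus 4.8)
The plan is to derive Theorem~4 from Theorem~3 and the normal form (23)--(26) just prepared. Write $F$ through its reductions $F_{ij}$ as in (9); these are $w^*$-continuous OVFs over the commutative algebra $\mathcal M=L^\infty(\Omega,\nu)$ and satisfy Proposition~1. I claim $F$ is stationary as soon as there is a function $\phi\in L^1(\Omega,\nu)^+$ obeying the pointwise inequalities (24)--(26). Indeed, for such a $\phi$ define $\varphi,\psi$ by (23); conditions (24)--(26) say precisely that the density matrices $\varphi(\omega),\psi(\omega)$ are positive semidefinite a.e., so $\varphi,\psi\in\mathcal N_*^+$, and (22) holds: testing both sides on the $w^*$-total family $\{a\varepsilon_{ij}:a\in\mathcal M,\ i,j=1,2\}$ and reducing (by the spectral theorem and boundedness of the $F_{ij}$) to $a,b$ projections, each side collapses to the integral of one and the same Radon--Nikodym density, via (17)--(21), the forced values of $\varphi_{12},\varrho_{12},\varrho_{12}-\varphi_{12}$ recorded just before (23), and Proposition~1(i),(iii),(iv). (One may note that the right-hand side of (22) is in fact independent of $\phi$: its $\phi$-derivative is $\int_\Omega\bigl(\tr(y^*x)-\tr(xy^*)\bigr)\,d\nu=0$ by cyclicity of $\tr$, so $\phi$ serves only to make $\varphi,\psi$ positive.)

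It remains to produce such a $\phi$, which is a pointwise problem: for a.e.\ $\omega$ one needs $t=\phi(\omega)$ with $\max\{0,r_{21}-\varrho_{22}\}\le t\le\min\{\varrho_{11},r_{21}\}$, $h_1(t):=t(t+\varrho_{22}-r_{21})\ge|\varphi_{12}|^2$, and $h_2(t):=(\varrho_{11}-t)(r_{21}-t)\ge|\varrho_{12}-\varphi_{12}|^2$ (all densities at $\omega$). On the interval $I(\omega)=[\max\{0,r_{21}-\varrho_{22}\},\min\{\varrho_{11},r_{21}\}]$ --- nonempty because $\varrho_{11},\varrho_{22},r_{12},r_{21}\ge0$ and $r_{12}+r_{21}=\varrho_{11}+\varrho_{22}$ (Proposition~1(ii)) --- the quadratic $h_1$ is nondecreasing and vanishes at the left end, and $h_2$ is nonincreasing and vanishes at the right end; at the right end $h_1$ equals $\varrho_{11}r_{12}$ or $r_{21}\varrho_{22}$, and at the left end $h_2$ equals $\varrho_{11}r_{21}$ or $r_{12}\varrho_{22}$, each of which dominates the relevant modulus squared a.e.\ (positivity of the $2\times2$ Gram densities of the pairs $F_{11},F_{12}$; $F_{21},F_{22}$; $F_{11},F_{21}$; $F_{12},F_{22}$ --- the $2\times2$ corners of $(G)$ below). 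Hence the admissible $t$ form the subinterval $[\hat t_1,\hat t_2]$ with
$$\hat t_1=\tfrac12\!\left((r_{21}-\varrho_{22})+\sqrt{(r_{21}-\varrho_{22})^2+4|\varphi_{12}|^2}\,\right),\quad \hat t_2=\tfrac12\!\left((\varrho_{11}+r_{21})-\sqrt{(\varrho_{11}-r_{21})^2+4|\varrho_{12}-\varphi_{12}|^2}\,\right),$$
which is nonempty, i.e.\ $\hat t_1\le\hat t_2$, precisely when
$$\sqrt{(r_{21}-\varrho_{22})^2+4|\varphi_{12}|^2}+\sqrt{(\varrho_{11}-r_{21})^2+4|\varrho_{12}-\varphi_{12}|^2}\le\varrho_{11}+\varrho_{22}\qquad\text{a.e.}\eqno(*)$$
Granting $(*)$, put $\phi=\hat t_1$; then $0\le\hat t_1\le\varrho_{11}\in L^1(\Omega,\nu)$ (in the $w^*$-continuous case) and $\hat t_1$ is a Borel function of the densities, so $\phi\in L^1(\Omega,\nu)^+$; points where a density vanishes need no care since everything above is polynomial in the densities, and for a merely localizable $\mathcal M$ one works on each finite summand.

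The crux is $(*)$, which is not a consequence of the pairwise Cauchy--Schwarz bounds alone: one needs the joint positivity of all the data. By Proposition~1(iii), $\varrho_{11},\varrho_{22},r_{12},r_{21},\varphi_{12}$ and $\varrho_{12}-\varphi_{12}$ are, over every $\pi\in\mathcal M^{\pr}$, the entries of the Gram matrix of $F_{11}(\pi),F_{12}(\pi),F_{21}(\pi),F_{22}(\pi)$; since $\int_\pi(\cdot)\,d\nu\ge0$ for all $\pi$ forces the integrand $\ge0$ a.e., the density matrix
$$G(\omega)=\begin{pmatrix}\varrho_{11}&\varphi_{12}&\overline{\varrho_{12}-\varphi_{12}}&0\\ \overline{\varphi_{12}}&r_{12}&0&\overline{\varrho_{12}-\varphi_{12}}\\ \varrho_{12}-\varphi_{12}&0&r_{21}&\varphi_{12}\\ 0&\varrho_{12}-\varphi_{12}&\overline{\varphi_{12}}&\varrho_{22}\end{pmatrix}\ \ge\ 0\qquad\text{a.e.,}\eqno(G)$$
the two zeros being the orthogonality relations of Proposition~1(i) and the coincidences among off-diagonal entries coming from (iv). Writing $(*)$ in terms of traces and least eigenvalues of $2\times2$ corners of $G$, it is equivalent to
$$\lambda_{\min}\begin{pmatrix}\varrho_{11}&\varphi_{12}\\ \overline{\varphi_{12}}&r_{12}\end{pmatrix}+\lambda_{\min}\begin{pmatrix}\varrho_{11}&\overline{\varrho_{12}-\varphi_{12}}\\ \varrho_{12}-\varphi_{12}&r_{21}\end{pmatrix}\ \ge\ \varrho_{11},$$
and this I would prove from $(G)$ together with $r_{12}+r_{21}=\varrho_{11}+\varrho_{22}$: the three $3\times3$ principal minors of $G$ meeting the zero corner give the estimates $\tfrac{|\varphi_{12}|^2}{r_{12}}+\tfrac{|\varrho_{12}-\varphi_{12}|^2}{r_{21}}\le\varrho_{11}$ and its cyclic analogues, while $\det G\ge0$ --- which, with $X=|\varphi_{12}|^2$ and $Y=|\varrho_{12}-\varphi_{12}|^2$, reads $\varrho_{11}r_{12}r_{21}\varrho_{22}-(\varrho_{11}r_{12}+r_{21}\varrho_{22})X-(\varrho_{11}r_{21}+r_{12}\varrho_{22})Y+(X-Y)^2\ge0$ --- supplies exactly the extra slack that the pairwise bounds miss. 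The ensuing reduction of $(*)$ to these inequalities is elementary but somewhat tedious, and it is the one genuinely delicate point of the argument.
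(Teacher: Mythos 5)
Your overall architecture matches the paper's: the reduction of stationarity to the existence of a single function $\phi\in L^1(\Omega,\nu)^+$ satisfying (24)--(26) with $\varphi,\psi$ given by (23) is exactly Lemma 5, and your verification sketch on the generators $a\varepsilon_{ij}$ is the same computation. Your bookkeeping of the pointwise feasibility problem is also correct: the endpoints $\hat t_1,\hat t_2$ are right, $\hat t_1$ coincides with the paper's $\phi_0$ (and with its $\phi^{\Delta_n}$), the $2\times2$ corners of your matrix $G$ do give $\hat t_1\le\min\{\varrho_{11},r_{21}\}$ and hence integrability, and $G(\omega)\succeq 0$ a.e.\ is a legitimate consequence of Proposition~1(i),(iii),(iv) and (17)--(20). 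Where you genuinely depart from the paper is in how feasibility, i.e.\ $\hat t_1\le\hat t_2$, is to be proved: the paper first settles the factor case (Lemma 6) by diagonalizing $\varrho$ via unitary invariance and writing $F$ in coordinates in $\mathbb{C}^4$, which yields the key bound $|\varphi_{12}|^2\le r_{12}r_{21}\varrho_{11}\varrho_{22}$ and the verification of (30) for $\phi_0$, and then transfers this to general $\mathcal M$ by the partition/uniform-approximation and dominated-convergence argument. Your route (pointwise, with $\varrho_{12}$ not assumed zero, no approximation step) would be more direct --- if it were complete.

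It is not complete, and the missing piece is precisely the heart of the theorem. The inequality $(*)$ (equivalently, that $\hat t_1$ also satisfies (26)) is asserted, not proved: you state that it should follow from the $3\times3$ principal minors of $G$, from $\det G\ge 0$ and from $r_{12}+r_{21}=\varrho_{11}+\varrho_{22}$, but you neither carry out this reduction nor verify that this particular list of consequences of $G\succeq0$ suffices, and you yourself flag it as ``the one genuinely delicate point.'' That the deferred step carries real content is easy to see: in the symmetric situation $\varrho_{11}=\varrho_{22}=r_{12}=r_{21}$ the $3\times3$ minors only give $|\varphi_{12}|^2+|\varrho_{12}-\varphi_{12}|^2\le\varrho_{11}^2$, whereas $(*)$ demands $|\varphi_{12}|+|\varrho_{12}-\varphi_{12}|\le\varrho_{11}$, so the quartic information in $\det G$ must genuinely enter, and the way it combines with the trace identity is exactly the computation the paper performs (in normalized coordinates) at the end of Lemma 6. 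So as it stands your argument proves Theorem 4 only modulo an unproved inequality; to close it you must either complete the derivation of $(*)$ from $G\succeq0$ (which I believe is true but is the entire analytic work of the theorem), or fall back on the paper's device: establish the bound $|\varphi_{12}|^2\le r_{12}r_{21}\varrho_{11}\varrho_{22}$ by the explicit $\mathbb{C}^4$ representation and then run the paper's verification of (28)--(30) and its approximation argument.
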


\medskip The following lemma gives conditionally ahead of time proof of the theorem.

\begin{lemma}
If the function  $\phi\in L^1(\Omega,\nu)$ satisfies inequalities (24) -- (26),
then the equality  (22) holds when $\varphi$ and  $\psi$ are defined by matrices (23).
\end{lemma}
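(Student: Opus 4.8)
The plan is to read the hypotheses (24)--(26) as the conditions that turn the matrices (23) into functionals $\varphi,\psi\in\mathcal{N}_*^+$, and then to reduce the identity (22) to a short list of pointwise identities between Radon--Nikodym derivatives, all of which are already packaged in Section~2. First I would verify this positivity. Since $\mathcal{N}_*=L^1(\Omega,\nu,M_2)$ and a matrix of $L^1$-functions represents a positive normal functional exactly when its density matrix is positive semidefinite $\nu$-a.e.\ (nonnegative diagonal, nonnegative determinant), the two halves of the left inequality in (24) give $\phi\geq0$ and $\phi+\varrho_{22}-r_{21}\geq0$, i.e.\ the diagonal of $\varphi$ is nonnegative, while (25) is $\phi(\phi+\varrho_{22}-r_{21})\geq|\varphi_{12}|^2$; likewise the right inequality in (24) gives $\varrho_{11}-\phi\geq0$, $r_{21}-\phi\geq0$, and (26) is the determinant condition for $\psi$. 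Every entry of (23) lies in $L^1(\Omega,\nu)$ because $\phi$ does and the others are the Radon--Nikodym derivatives attached to the $F_{ij}$ in (18)--(21). Hence $\varphi,\psi\in\mathcal{N}_*^+$; in particular they are hermitian, and by inspection of (23), $\varphi+\psi=\varrho$.

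Next I would reduce (22) to a finite check. Both sides are linear in $x$ and conjugate-linear in $y$, and every $x\in\mathcal{N}$ is the finite sum $\sum_{i,j}x_{ij}\varepsilon_{ij}$, so it suffices to prove (22) for $x=a\varepsilon_{ij}$, $y=b\varepsilon_{kl}$ ($a,b\in\mathcal{M}$). For the left-hand side, Prop.~1(iii), $w^*$-continuity of $F$, and the spectral theorem give
$$
\langle F(a\varepsilon_{ij}),F(b\varepsilon_{kl})\rangle=\langle F_{ij}(b^*a),F_{kl}(\textbf{1})\rangle=\int\limits_{\Omega}(b^*a)\,s_{ij,kl}\,d\nu,\quad s_{ij,kl}\equiv\frac{d}{d\nu}\langle F_{ij}(\cdot),F_{kl}(\textbf{1})\rangle .
$$
For the right-hand side, $(b\varepsilon_{kl})^*(a\varepsilon_{ij})=\delta_{ki}(b^*a)\varepsilon_{lj}$ and $(a\varepsilon_{ij})(b\varepsilon_{kl})^*=\delta_{jl}(b^*a)\varepsilon_{ik}$, whence, by $\varphi(c\,\varepsilon_{pq})=\int_{\Omega}c\,\varphi_{qp}\,d\nu$ and its analogue for $\psi$,
$$
\varphi\big((b\varepsilon_{kl})^*(a\varepsilon_{ij})\big)+\psi\big((a\varepsilon_{ij})(b\varepsilon_{kl})^*\big)=\int\limits_{\Omega}(b^*a)\,\big(\delta_{ki}\varphi_{jl}+\delta_{jl}\psi_{ki}\big)\,d\nu .
$$
Since $a,b$ are arbitrary (take $b=\textbf{1}$ and let $a$ run over $\mathcal{M}$), (22) is equivalent to the sixteen a.e.\ identities
$$
s_{ij,kl}=\delta_{ki}\,\varphi_{jl}+\delta_{jl}\,\psi_{ki},\qquad i,j,k,l\in\{1,2\}.\eqno(\ast)
$$

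Finally I would verify $(\ast)$. As $\varphi,\psi$ are hermitian, $(\ast)$ for $(i,j,k,l)$ is the conjugate of $(\ast)$ for $(k,l,i,j)$, so only ten instances are independent. The diagonal instances $i=j=k=l$ read $\varrho_{ii}=\varphi_{ii}+\psi_{ii}$ and hold because $\varphi+\psi=\varrho$; the instances with both Kronecker symbols zero read $s_{11,22}=0$, $s_{12,21}=0$ and hold by Prop.~1(i); and the remaining ``mixed'' instances ($s_{11,12}=\varphi_{12}$, $s_{12,11}=\varphi_{21}$, $s_{11,21}=\psi_{21}=s_{12,22}$, $s_{12,12}=\varphi_{22}+\psi_{11}$, and their conjugates) follow by rewriting each $\langle F_{ij}(\cdot),F_{kl}(\textbf{1})\rangle$ through Prop.~1(iii),(iv) in terms of the scalar products whose Radon--Nikodym derivatives were computed in Section~3, together with $r_{12}+r_{21}=\varrho_{11}+\varrho_{22}$ (Prop.~1(ii)) for the instance $(1,2,1,2)$: for example $s_{11,12}=\frac{d}{d\nu}\langle F_{11}(\cdot),F_{12}(\textbf{1})\rangle=\frac{d}{d\nu}\langle F_{21}(\cdot),F_{22}(\textbf{1})\rangle=\varphi_{12}$ by Prop.~1(iv), and $s_{12,12}=r_{12}=\varrho_{11}+\varrho_{22}-r_{21}=\varphi_{22}+\psi_{11}$. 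This gives $(\ast)$, hence (22), which is the assertion of the lemma (and, together with $\varphi,\psi\in\mathcal{N}_*^+$, the stationarity of $F$).

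The step I expect to be the real work is the last one, but the difficulty is organizational rather than conceptual: everything needed beyond Prop.~1 is already in (17)--(21) and (23), and the task is to pair each of the sixteen ``structure constants'' $s_{ij,kl}$ with the correct entries of (23), which forces a careful instance-by-instance use of Prop.~1(iii),(iv) to express every $\langle F_{ij}(\cdot),F_{kl}(\textbf{1})\rangle$ in terms of the few Radon--Nikodym derivatives ($\varrho_{11},\varrho_{22},r_{21},\varrho_{12},\varphi_{12}$) actually occurring in (23).
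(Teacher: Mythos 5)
Your proposal is correct and takes essentially the same route as the paper's own proof: both reduce (22) by sesquilinearity, $w^*$-continuity and Prop.~1(iii) to the matrix-unit instances and then verify these directly via the Radon--Nikodym identifications (17)--(21), the relation $\varphi+\psi=\varrho$, and Prop.~1(i),(ii),(iv); you merely organize the sixteen checks more systematically (and add the positivity of $\varphi,\psi$, which the paper leaves implicit). The only quibble is that your explicit case list omits the classes of $s_{21,21}$ and $s_{21,22}$, but these follow at once from the same identifications ($s_{21,21}=r_{21}=\varphi_{11}+\psi_{22}$, $s_{21,22}=\varphi_{12}$), so this is not a gap.
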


\begin{proof}
As $F$ is linear and  $w^*$-continuous (and by Prop. 3(iii)) we must verify only
that the following equalities hold
$$
\langle F(\pi\varepsilon_{ij}), F(\varepsilon_{kl})\rangle=\varphi(\varepsilon_{lk}(\pi\varepsilon_{ij}))+
\psi((\pi\varepsilon_{ij})\varepsilon_{lk}),\ \pi\in\mathcal{M}^{\pr},\ i,j,k,l\in\{1,2\}.$$
They are  easily verified directly. For example, in view of  (17),(8),(9) and by Prop. 1(iv) we have
\begin{align}
\langle F(\pi\varepsilon_{ii}), F(\varepsilon_{ii})\rangle&=\int\limits_{\pi}\varrho_{ii}\,d\nu=
\int\limits_{\pi}(\varphi_{ii}+\psi_{ii})\,d\nu=\varphi(\pi\varepsilon_{ii})+\psi(\pi\varepsilon_{ii})\nonumber\\
&=\varphi(\varepsilon_{ii}(\pi\varepsilon_{ii}))+\psi((\pi\varepsilon_{ii})\varepsilon_{ii}),\quad i=1,2,\nonumber\\
\langle F(\pi\varepsilon_{11}), F(\varepsilon_{12})\rangle&=\langle F_{11}(\pi), F_{12}(\textbf{1})\rangle
=\langle F_{11}(\textbf{1}), F_{12}(\pi)\rangle\nonumber\\
&=\langle F_{21}(\pi), F_{22}(\textbf{1})\rangle=\int\limits_{\pi}\varphi_{12}\,d\nu=\varphi(\pi\varepsilon_{21})
\nonumber\\
&=\varphi((\pi\varepsilon_{21})\varepsilon_{11})+\psi(\varepsilon_{11}(\pi\varepsilon_{21})).\nonumber\end{align}
Similarly we can  verify remaining equalities, and the lemma follows.
\end{proof}

\begin{lemma}
Any OVF over the factor of type $I_2$ is stationary.
\end{lemma}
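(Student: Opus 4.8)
\medskip
The plan is to derive the lemma from Lemma~5. A factor of type $I_2$ is the algebra $M_2$ itself, i.e.\ the case $\mathcal{M}=\mathbb{C}$ with $\Omega$ a one-point set; since $M_2$ is finite-dimensional, every linear map on it is $w^*$-continuous, so every OVF over it is a $w^*$-continuous OVF and $\varrho$ (given by (5)) is a positive functional. In this situation all the functions $\varrho_{ij},r_{ij},\varphi_{12},\varrho_{12}$ of Section~3 collapse to constants, inequalities (24)--(26) become inequalities between real numbers, and by Lemma~5 it suffices to produce a single real $\phi\ge 0$ satisfying (24)--(26). Thus the lemma reduces to a purely numerical existence statement.

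First I would make the constants explicit. Writing $g_1=F(\varepsilon_{11})$, $g_2=F(\varepsilon_{12})$, $g_3=F(\varepsilon_{21})$, $g_4=F(\varepsilon_{22})$, the identities of Section~3 together with Prop.~1(i),(iv) give $\varrho_{11}=\|g_1\|^2$, $\varrho_{22}=\|g_4\|^2$, $r_{12}=\|g_2\|^2$, $r_{21}=\|g_3\|^2$, $\varphi_{12}=\langle g_1,g_2\rangle=\langle g_3,g_4\rangle$ and $\varrho_{12}-\varphi_{12}=\langle g_3,g_1\rangle$. Set $c_1=|\varphi_{12}|^2$, $c_2=|\varrho_{12}-\varphi_{12}|^2$, $g(\phi)=\phi(\phi+\varrho_{22}-r_{21})$, $h(\phi)=(\varrho_{11}-\phi)(r_{21}-\phi)$, $\alpha=\max\{0,r_{21}-\varrho_{22}\}$, $\beta=\min\{\varrho_{11},r_{21}\}$; then (24)--(26) say exactly that $\phi\in[\alpha,\beta]$, $g(\phi)\ge c_1$ and $h(\phi)\ge c_2$. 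Using Prop.~1(ii), i.e.\ $r_{12}+r_{21}=\varrho_{11}+\varrho_{22}$, one checks $[\alpha,\beta]\ne\emptyset$, that $g$ is nondecreasing on $[\alpha,\beta]$ with $g(\alpha)=0$, and $h$ nonincreasing with $h(\beta)=0$. The $2\times2$ principal minors of the Gram matrix $G=(\langle g_i,g_j\rangle)_{i,j=1}^{4}$ give $c_1\le\min\{\varrho_{11}r_{12},\varrho_{22}r_{21}\}$ and $c_2\le\min\{\varrho_{11}r_{21},\varrho_{22}r_{12}\}$, which force $g(\beta)\ge c_1$ and $h(\alpha)\ge c_2$; hence $\{\phi\in[\alpha,\beta]:g(\phi)\ge c_1\}=[\phi_1,\beta]$ and $\{\phi\in[\alpha,\beta]:h(\phi)\ge c_2\}=[\alpha,\phi_2]$ are nonempty subintervals, and a valid $\phi$ exists if and only if $\phi_1\le\phi_2$. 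Solving the two quadratics, $\phi_1\le\phi_2$ is equivalent to
$$\sqrt{(\varrho_{11}-r_{12})^2+4c_1}+\sqrt{(\varrho_{11}-r_{21})^2+4c_2}\le\varrho_{11}+\varrho_{22},$$
and I would prove this by squaring twice: the intermediate quantity is nonnegative because $\varrho_{11}\varrho_{22}+r_{12}r_{21}\ge 2(c_1+c_2)$ (a consequence of the $3\times3$ principal minors of $G$), and after the second squaring the inequality becomes exactly $\det G\ge 0$. Since $G$ is a Gram matrix all of these hold, and Lemma~5 completes the proof.

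The step I expect to be the real obstacle is precisely the overlap $\phi_1\le\phi_2$: inequalities (25) and (26) pull $\phi$ toward opposite ends of $[\alpha,\beta]$, and reconciling them cannot be done from Cauchy--Schwarz on individual pairs $g_i,g_j$ — it genuinely needs the full positive semidefiniteness of the $4\times4$ Gram matrix $G$; everything else (nonemptiness of $[\alpha,\beta]$, the inequalities $g(\beta)\ge c_1$, $h(\alpha)\ge c_2$, and the translation of (24)--(26)) is routine. A more symmetric alternative is to parametrize $F$ by $v_0=F(\textbf{I})$ and $v_k=F(\sigma_k)$ for self-adjoint $\sigma_1,\sigma_2,\sigma_3$ with $\sigma_j\sigma_k+\sigma_k\sigma_j=2\delta_{jk}\textbf{I}$: since the only orthogonal pairs of nonzero projections in $M_2$ are a projection and its complement, $F$ is an OVF if and only if $\RE\langle v_j,v_k\rangle=\|v_0\|^2\delta_{jk}$ for $j,k\in\{1,2,3\}$, and one checks that stationarity amounts to choosing the scalar part of the candidate density matrix $\varphi$ in a closed interval whose nonemptiness is once more the nonnegativity of the Gram determinant $\det(\langle v_k,v_l\rangle)_{k,l=0}^{3}$.
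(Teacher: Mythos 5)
Your proof is correct, and it follows the paper's overall strategy — reduce, via Lemma~5, to producing one scalar $\phi$ satisfying (24)--(26) — but it settles the scalar feasibility by a genuinely different mechanism. The paper normalizes $\|F(\textbf{I})\|=1$, uses unitary invariance to make $\varrho$ diagonal, splits into the rank-one and rank-two cases, chooses the explicit root $\phi_0$ that saturates (25), checks (24) by Cauchy--Schwarz, and proves (26) by writing $F$ in coordinates in $\mathbb{C}^4$; the net output of that coordinate computation is $|\varphi_{12}|^2\le\varrho_{11}\varrho_{22}r_{12}r_{21}$, which in the diagonalized, normalized situation is precisely the statement $\det G\ge 0$ for the Gram matrix $G$ of $F(\varepsilon_{11}),F(\varepsilon_{12}),F(\varepsilon_{21}),F(\varepsilon_{22})$. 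You keep $\varrho_{12}$ general (so no unitary reduction and no rank case split), recast (24)--(26) as an interval-overlap condition $\phi_1\le\phi_2$, and reduce it to positivity of $G$. The two steps you only assert do check out: with $\langle g_1,g_4\rangle=\langle g_2,g_3\rangle=0$, $\langle g_1,g_2\rangle=\langle g_3,g_4\rangle$, $\langle g_1,g_3\rangle=\langle g_2,g_4\rangle$ (Prop.~1(i),(iv)) and $\varrho_{11}+\varrho_{22}=r_{12}+r_{21}$ (Prop.~1(ii)), the four $3\times3$ principal minors of $G$ give $c_1+c_2\le\min\{\varrho_{11}\varrho_{22},\,r_{12}r_{21}\}$, so the intermediate quantity $(\varrho_{11}+\varrho_{22})^2-A^2-B^2=2(\varrho_{11}\varrho_{22}+r_{12}r_{21})-4(c_1+c_2)$ is nonnegative, and a direct expansion (using the same relations) yields the identity
$$
\bigl[(\varrho_{11}+\varrho_{22})^2-A^2-B^2\bigr]^2-4A^2B^2=16\det G,
$$
where $A,B$ are your two square roots and $c_1=|\varphi_{12}|^2$, $c_2=|\varrho_{12}-\varphi_{12}|^2$; so the twice-squared inequality is literally $\det G\ge0$, as you claimed. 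Once these computations are written out your argument is complete. What your route buys is uniformity (the degenerate cases $\varrho_{ii}=0$ or $\rg\varrho=1$ need no separate treatment) and a transparent identification of the one non-elementary input — positive semidefiniteness of the full $4\times4$ Gram matrix — which in the paper is embodied in the ad hoc but more elementary coordinate computation; the paper's choice $\phi_0$ is exactly your left endpoint $\phi_1$, so the two proofs differ only in how they certify $\phi_1\le\phi_2$.
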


\begin{proof}
Let $F: M_2\rightarrow H$ be a OVF. Without loss of generality, we may  assume that
$\|F(\textbf{I})\|=1$. Let us denote for brevity $F_{ij}=F(\varepsilon_{ij})$.
By virtue of standard arguments connected with unitary invariance, it is sufficient to consider the case
when the matrix  $\varrho=(\varrho_{ij})$ (here it is scalar) is diagonal:
$$
\varrho= \left(\begin{array}{cc}\varrho_{11} & 0\\
0 & \varrho_{22}\end{array}\right),\quad \varrho_{11}+\varrho_{22}=1,\ \varrho_{ii}\geq0.$$
Also note that
$$
r_{ij}=\|F_{ij}\|^2,\quad r_{12}+r_{21}=1.$$
\textit{Case 1: the rank of matrix  $\varrho$ equals 1.} Let $\varrho_{11} = 1, \varrho_{22}=0$
(the same arguments can be used if $\varrho_{11} = 0, \varrho_{22}=1$). By the equality
$\varrho_{12}=0$ it follows that  $\{F_{ij}\}$ is the orthogonal system of vectors in  $H$.
Now we see that  equality  (22) is satisfied with
$$
\varphi= \left(\begin{array}{cc} r_{21} & 0\\
0 & 0\end{array}\right),\qquad \psi= \left(\begin{array}{cc} r_{12} & 0\\
0 & 0\end{array}\right)$$
In fact, for $x=(x_{ij}),\ y=(y_{ij})\in M_2$ we have
\begin{align}
\langle F(x), F(y)\rangle&=\sum_{i,j=1}^2 x_{ij}\overline{y_{ij}}\|F_{ij}\|^2
=x_{11}\overline{y_{11}}+r_{12}x_{12}\overline{y_{12}}+ r_{21}x_{21}\overline{y_{21}}\nonumber\\
&=r_{21}(x_{11}\overline{y_{11}}+x_{21}\overline{y_{21}})+ r_{12}(x_{11}\overline{y_{11}}+x_{12}\overline{y_{12}})
\nonumber\\
&=\tr\left[\left(\begin{array}{cc} r_{21} & 0\\
0 & 0\end{array}\right)\left(\begin{array}{cc} \overline{y_{11}} & \overline{y_{21}}\\
\overline{y_{12}} & \overline{y_{22}}\end{array}\right)\left(\begin{array}{cc} x_{11} &  x_{12}\\
 x_{21} &  x_{22}\end{array}\right)\right]\nonumber\\
 &+ \tr\left[\left(\begin{array}{cc} r_{12} & 0\\
0 & 0\end{array}\right)\left(\begin{array}{cc} x_{11} &  x_{12}\\
 x_{21} &  x_{22}\end{array}\right)\left(\begin{array}{cc} \overline{y_{11}} & \overline{y_{21}}\\
\overline{y_{12}} & \overline{y_{22}}\end{array}\right)\right]\nonumber\\
&=\varphi(y^*x)+\psi(xy^*).\nonumber\end{align}
It is easily seen that $\varphi$ and $\psi$  are determined uniquely.

\medskip\textit{Case 2: the rank of matrix  $\varrho$ equals 2.}\ In the case
$$
\varrho= \left(\begin{array}{cc}\varrho_{11} & 0\\
0 & \varrho_{22}\end{array}\right),\quad \varrho_{11}+\varrho_{22}=1,\ \varrho_{ii}>0,$$
and the required matrices $\varphi$ and  $\psi$ obtain the form
$$
\varphi= \left(\begin{array}{cc}\phi & \varphi_{12}\\
\overline{\varphi_{12}} & \phi+\varrho_{22}-r_{21}\end{array}\right),\quad
\psi= \left(\begin{array}{cc}\varrho_{11}-\phi & -\varphi_{12}\\
-\overline{\varphi_{12}} & r_{21}-\phi\end{array}\right),$$
where
$$
\varphi_{12}=\langle F_{21}, F_{22}\rangle=-\langle F_{21}, F_{11}\rangle=\langle F_{11}, F_{12}\rangle
\eqno(27)$$
(the second equality follows from $\varrho_{12}=0$ (see (22)), the third one does from Prop. 1(iv), and
$\phi$ is an unknown for now parameter   satisfying inequalities (see (24) -- (26))
$$
\max\{0,r_{21}-\varrho_{22}\}\leq\phi\leq\min\{\varrho_{11},r_{21}\},\eqno(28)$$
$$
|\varphi_{12}|^2\leq \phi(\phi+\varrho_{22}-r_{21}),\eqno(29)$$
$$
|\varphi_{12}|^2\leq(\rho_{11}-\phi)(r_{21}-\phi).\eqno(30)$$
We show that  $\phi_0=\frac12[r_{21}-\varrho_{22}+\sqrt{(r_{21}-\varrho_{22})^2+4|\varphi_{12}|^2}]$,
a solution of the quadratic equation  $\phi(\phi+\varrho_{22}-r_{21})- |\varphi_{12}|^2=0$,
satisfies  inequalities  (28) -- (30). It is evident that $\phi_0$ satisfies (29) and $\phi_0\geq0$.
By  (27)
$$
2\phi_0\leq r_{21}-\varrho_{22}+\sqrt{(r_{21}-\varrho_{22})^2+4r_{21}\varrho_{22}}=2r_{21},$$
\begin{align}
2\phi_0&\leq \varrho_{11}-r_{12}+\sqrt{(r_{12}-\varrho_{11})^2+4|\varphi_{12}|^2}\leq
\varrho_{11}-r_{12}+\sqrt{(r_{12}-\varrho_{11})^2+4r_{12}\varrho_{11}}\nonumber\\
&=2\varrho_{11}.\nonumber\end{align}
Hence we have $\phi_0\leq\min\{\varrho_{11},r_{21}\}$ and (28) follows.

\medskip In order to verify  (30) we  represent  $F$ in the coordinate form. As $\dim\rg F\leq4$
we will assume that  $F$ acts into  $\mathbb{C}^4$.
By a suitable choice of the coordinate system in $\mathbb{C}^4$ we may assume that
$$
F(I)=(1,0,0,0),\quad F_{11}=(\alpha,\zeta,0,0),\quad F_{22}=(1-\alpha,-\zeta,0,0).$$
In this case
$$
0<\alpha<1,\quad |\zeta|^2=\alpha-\alpha^2,\  \zeta=\omega|\zeta|,\ |\omega|=1.$$
Put  $F_{12}=(\xi_1,\xi_2,\xi_3,\xi_4),\ F_{21}=(\eta_1,\eta_2,\eta_3,\eta_4)$ and note that
$$
\eta_1=\langle F(\varepsilon_{21}), F(I)\rangle=\tr(\varrho\varepsilon_{21})=0,$$
Similarly,  $\xi_1=0$. In addition,  $\eta_2\overline{\zeta}=-\overline{\xi_2}\zeta$ (by Prop. 1(iv)) so that
 $\eta_2=-\omega^2\overline{\xi_2}$. Thus, putting $\xi\equiv\xi_2$,
$$
F_{12}=(0,\xi,\xi_3,\xi_4),\quad F_{21}=(0,-\omega^2\overline{\xi},\eta_3,\eta_4),$$
and also according to Prop. 1(i),(ii),
$$
2|\xi|^2+|\xi_3|^2+|\xi_4|^2+|\eta_3|^2+|\eta_4|^2=1,\quad \xi_3\overline{\eta_3}+\xi_4\overline{\eta_4}
=\overline{\omega}^2\xi^2.$$
It should be noted that
\begin{align}
r_{12}r_{21}&=(|\xi|^2+|\xi_3|^2+|\xi_4|^2)(|\xi|^2+|\eta_3|^2+|\eta_4|^2)\nonumber\\
&=|\xi|^2(1-|\xi|^2)+(|\xi_3|^2+|\xi_4|^2)(|\eta_3|^2+|\eta_4|^2)\nonumber\\
&\geq|\xi|^2-|\xi|^4+|\xi_3\overline{\eta_3}+\xi_4\overline{\eta_4}|^2=|\xi|^2.\nonumber\end{align}
Therefore,
$$
|\varphi_{12}|^2=|\langle F_{21}, F_{11}\rangle|^2=|\xi|^2|\zeta|^2\leq r_{12}r_{21}\varrho_{11}\varrho_{22}.$$
Since $r_{12}+r_{21}= \varrho_{11}+\varrho_{22}=1$, we have
$$
\varrho_{22}-r_{21}=\varrho_{22}r_{12}-\varrho_{11}r_{21}.$$
Now (30) follows from computation:
\begin{align}
2[(\varrho_{11}-\phi_0)(r_{21}&-\phi_0)-|\varphi_{12}|^2]=2(\varrho_{11}r_{22}-\phi_0)\nonumber\\
&=2\varrho_{11}r_{21}-r_{21}+\varrho_{22}-\sqrt{(\varrho_{22}-r_{21})^2+4|\varphi_{12}|^2}\nonumber\\
&=\varrho_{11}r_{21}+\varrho_{22}r_{12}-\sqrt{(\varrho_{11}r_{21}-\varrho_{22}r_{12})^2+4|\varphi_{12}|^2}\nonumber\\
&\geq\varrho_{11}r_{21}+\varrho_{22}r_{12}-\sqrt{(\varrho_{11}r_{21}-\varrho_{22}r_{12})^2+
4r_{12}r_{21}\varrho_{11}\varrho_{22}}=0.\nonumber\end{align}
Applying Lemma 5 to the case in question, we finish the proof.
\end{proof}

\medskip \noindent\textit{Proof of Theorem 4. } in view of lemma 5 it is sufficient to show the existence of
a function $\phi\in L^1(\Omega,\nu)$ with the properties set out in (24) -- (26).
It is convenient to begin with the atomic case. Let $\Delta=\{\pi_k\}_{k\in K}$
be an arbitrary measurable partition of  $\Omega$, i. e.
$$
\Omega=\bigcup\limits_k \pi_k,\quad \pi_k\cap\pi_l=\emptyset\ (k\ne l).$$
Via  $\mathcal{M}_{\Delta}$ we denote the algebra of functions  $x\in L^{\infty}(\Omega,\nu)$ in the form of
 $x=\sum\limits_k\lambda_k\pi_k,\ \lambda_k\in\mathbb{C},\ \pi_k\in\Delta$.
Let $\mathcal{N}_{\Delta}=\mathcal{M}_{\Delta}\otimes M_2$ be a subalgebra of $\mathcal{N}$,
being the direct sum of factors of type $I_2$:
$$
\mathcal{N}_{\Delta}=\sum_k \pi_k\otimes M_2\quad \text{ where }\pi\otimes M_2\equiv\left\{
\left(\begin{array}{cc}\lambda_{11}\pi & \lambda_{12}\pi\\
\lambda_{21}\pi & \lambda_{22}\pi\end{array}\right):\ \lambda_{ij}\in\mathbb{C}\right\},$$
Particularly $\mathcal{N}_{\Delta}$ is a $W^*$-algebra.  The restriction  $F\,|\,\mathcal{N}_{\Delta}$ is
a $w^*$-continuous OVF over $\mathcal{N}_{\Delta}$. In this case the parametric functions characterizing
$F\,|\,\mathcal{N}_{\Delta}$ are defined as follows
$$
\varrho_{ii}^{\Delta}=\sum_k \varrho_{iik}^{\Delta}\pi_k,\quad \varrho_{iik}^{\Delta}
=\langle F_{ii}(\pi_k),F_{ii}(\textbf{1})\rangle,$$
$$
\varrho_{ij}^{\Delta}=\sum_k \varrho_{ijk}^{\Delta}\pi_k,\quad \varrho_{ijk}^{\Delta}
=\langle F_{ji}(\pi_k),F_{11}(\textbf{1})+F_{22}(\textbf{1})\rangle,\ i\ne j,$$
$$
r_{ij}^{\Delta}=\sum_k r_{ijk}^{\Delta}\pi_k,\quad r_{ijk}^{\Delta}=\|F_{ji}(\pi_k)\|^2,$$
$$
\varphi_{12}^{\Delta}=\overline{\varphi_{21}^{\Delta}}=\sum_k \varphi_{12k}^{\Delta}\pi_k,\quad
\varphi_{12k}^{\Delta}=\langle F_{21}(\pi_k),F_{11}(\textbf{1})\rangle,$$
with
$$
r_{12}^{\Delta}+r_{21}^{\Delta}=\varrho_{11}^{\Delta}+\varrho_{22}^{\Delta}.$$
According to Lemma 6 and  (18) -- (20) it is defined a function $\phi^{\Delta}\in L^1(\Omega,\nu)$ such that
$$
\max\{0,r_{21}^{\Delta}(\omega)-\varrho_{22}^{\Delta}(\omega)\}\leq\phi^{\Delta}(\omega)\leq
\min\{\varrho_{11}^{\Delta}(\omega),r_{21}^{\Delta}(\omega)\}\text{  a. e.},\eqno(31)$$
$$
|\varphi_{12}^{\Delta}(\omega)|\leq[ \phi^{\Delta}(\omega)(\phi^{\Delta}(\omega)+
\varrho_{22}^{\Delta}(\omega)-r_{21}^{\Delta}(\omega))]^{1/2}\text{  a. e.},$$
$$
|\varrho_{12}^{\Delta}(\omega)-\varphi_{12}^{\Delta}(\omega)|\leq[\rho_{11}^{\Delta}(\omega)-
\phi^{\Delta}(\omega)]^{1/2}[r_{21}^{\Delta}(\omega)-\phi^{\Delta}(\omega)]^{1/2}\text{  a. e.}$$

 Let us turn to the general case. Since $(\Omega, \nu)$ is a localizable measure space,
for the construction of a function $\phi\in L^1(\Omega,\nu)^+$ we can restrict our attention to the case when
$\nu(\Omega)<+\infty$. We will choose a sequence  $\Delta_n=\{\pi_{kn}\}_k$ of partitions  $\Omega$ such that
$$
|\varrho_{11}(\omega)-\varrho^n_{11k}|,\ |r_{21}(\omega)-r_{21k}^n|,\ ||\varphi_{12}(\omega)|
-|\varphi_{12k}^n||<\frac1n,\quad \omega\in\pi_{kn}.$$
(Here, for example, $\varrho_{11k}^n\equiv\varrho_{11k}^{\Delta_n}$.) The partition $\{\pi_{kn}\}_k$ may be
constructed in the following way: denoting
$$
\sigma_{tn}=\{\omega: \frac tn\leq \varrho_{11}(\omega)<\frac{t+1}n\},$$
$$
\sigma'_{tn}=\{\omega: \frac tn\leq r_{21}(\omega)<\frac{t+1}n\},$$
$$
\sigma''_{tn}=\{\omega: \frac tn\leq|\varphi_{21}(\omega)|<\frac{t+1}n\},\quad t=0,1,2,...,$$
we put for multi-index $k=(t,s,m),\ t,s,m=0,1,2,...$
$$
\pi_{kn}=\sigma_{tn}\cap\sigma'_{sn}\cap\sigma''_{mn},\ \text{  if } \nu(\pi_{kn})>0.$$
As $k=(t,s,m)$ we have
$$
\frac tn\leq \varrho_{11}(\omega)<\frac{t+1}n,\quad \omega\in\pi_{kn} \Rightarrow$$
$$
\frac tn\nu(\pi_{kn})\leq \int\limits_{\pi_{kn}}\varrho_{11}\,d\nu=\varrho_{11k}^n\nu(\pi_{kn})\leq
\frac{t+1}n\nu(\pi_{kn})\Rightarrow$$
$$
\frac tn\leq \varrho_{11k}^n\leq\frac{t+1}n\quad\Rightarrow\quad |\varrho_{11}(\omega)-\varrho_{11k}^n|
<\frac 1n\quad (\omega\in\pi_{kn}).$$
The arbitrariness of $k$ and similar computations for $r_{21},\ |\varphi_{12}|$  then  implies
$$
|\varrho_{11}(\omega)-\varrho^n_{11k}|,\ |r_{21}(\omega)-r_{21k}^n|,\ ||\varphi_{12}(\omega)|
-|\varphi_{12k}^n||<\frac1n,\quad \text{a. e.}$$
Thus, $\varrho_{11}^n, r_{21}^n,\ |\varphi_{12}^n|$ are the sequences of simple integrable functions that
converge uniformly (respectively) to $\varrho_{11}, r_{21},\ |\varphi_{12}|$.
As a consequence,  $\varrho_{22}^n \rightrightarrows
\varrho_{22}, r_{12}^n\rightrightarrows r_{12}$. In accordance with Lemma 6 we put
$$
\phi^{\Delta_n}(\omega)=\frac12[r_{21}^n(\omega)-\varrho_{22}^n(\omega)+\sqrt{(r_{21}^n(\omega)-
\varrho_{22}^n(\omega))^2+4|\varphi_{12}^n(\omega)|^2}].$$
By (31) the sequence  $\phi^{\Delta_n}$ is dominated with the integrable function
$\min\{\varrho_{11},r_{21}\}$  (we keep in mind that $\nu(\Omega)<+\infty$).
So by  the Dominated Convergence Theorem, the function $\phi(\omega)\equiv \lim_n \phi^{\Delta_n}(\omega),
\ \omega\in\Omega$, is integrable and satisfies inequalities  (24) -- (26). The proof is complete.$\qed$

\end{document}